\newtheorem{theorem}{Theorem}[section]
\newtheorem{proposition}[theorem]{Proposition}
\newtheorem{lemma}[theorem]{Lemma}
\newtheorem{corollary}[theorem]{Corollary}
\newtheorem{question}[theorem]{Question}
\newtheorem{conjecture}[theorem]{Conjecture}
\newtheorem{example}[theorem]{Example}
\newtheorem{remark}[theorem]{Remark}
\newenvironment{proof}{{\noindent \sc Proof. }}{\hfill $\Qed$\\}
\newcommand{\la}{\langle}
\newcommand{\ra}{\rangle}
\newcommand{\Qed}{\rule{2.5mm}{3mm}}
\newcommand{\Cay}{\hbox{{\rm Cay}}}
\newcommand{\ZZ}{\mathbb{Z}}
\newcommand{\B}{{\cal{B}}}
\newcommand{\G}{{\overline{G}}}
\newcommand{\F}{{\cal{F}}}
\newcounter{case}
\renewcommand{\thecase}{\arabic{case}}
\newcounter{subcase}
\numberwithin{subcase}{case}
\newenvironment{proofT2}{{\noindent \sc Proof of Theorem~\ref{the:main2}.}}{\hfill $\Qed$ \\}
\newenvironment{proofT3}{{\noindent \sc Proof of Theorem~\ref{the:main3}.}}{\hfill $\Qed$ \\}
\begin{document}


\begin{center}
{\bf\large INTERSECTION DENSITY OF TRANSITIVE GROUPS\\ 
OF CERTAIN DEGREES} \\ [+4ex]
Ademir Hujdurovi\'c{\small$^{a,b,}$}\footnotemark,  
Klavdija Kutnar{\small$^{a,b,}$}\footnotemark$^{,*}$,  
\addtocounter{footnote}{0} 
Dragan Maru\v si\v c{\small$^{a, b, c,}$}\footnotemark 
\ and
\v Stefko Miklavi\v c{\small$^{a, b, c,}$}\footnotemark 
\\ [+2ex]
{\it \small 
$^a$University of Primorska, UP IAM, Muzejski trg 2, 6000 Koper, Slovenia\\
$^b$University of Primorska, UP FAMNIT, Glagolja\v ska 8, 6000 Koper, Slovenia\\
$^c$IMFM, Jadranska 19, 1000 Ljubljana, Slovenia}
\end{center}

\addtocounter{footnote}{-3}
\footnotetext{The work of Ademir Hujdurovi\'c  is supported in part by the Slovenian Research Agency (research program P1-0404 and research projects
N1-0062, J1-9110, N1-0102, J1-1691, J1-1694, J1-1695, N1-0140, 
N1-0159 and J1-2451).}
\addtocounter{footnote}{1}
\footnotetext{The work of Klavdija Kutnar  is supported in part by the Slovenian Research Agency (research program P1-0285 and research projects
N1-0062, J1-9110, J1-9186, J1-1695, J1-1715, N1-0140, J1-2451 and J1-2481).}
\addtocounter{footnote}{1}
\footnotetext{The work of Dragan Maru\v si\v c is supported in part by the Slovenian Research Agency (I0-0035, research program P1-0285
and research projects N1-0062,  J1-9108, 
J1-1694, J1-1695, N1-0140 and J1-2451).}
\addtocounter{footnote}{1}
\footnotetext{
The work of \v Stefko Miklavi\v c is supported in part by the Slovenian Research Agency (research program P1-0285
and research projects N1-0062,  J1-9110, 
J1-1695, N1-0140, N1-0159 and J1-2451).

~*Corresponding author e-mail:~klavdija.kutnar@upr.si}

\begin{abstract}
Two elements $g$ and $h$ of a  permutation group $G$  acting on a set $V$ are said to be {\em intersecting} if $g(v) = h(v)$ for some $v \in V$.
More generally, a subset ${\cal F}$ of $G$ is an {\em intersecting set}
if every pair of elements of ${\cal F}$ is intersecting.
The {\em intersection density} $\rho(G)$ of a transitive permutation
group $G$ is the maximum value of the quotient $|{\cal F}|/|G_v|$ where ${\cal F}$ runs over all intersecting sets in $G$ and $G_v$ is a stabilizer
of $v\in V$.  In this paper the intersection density of transitive groups of degree twice a prime is determined, and proved to be either $1$ or $2$. In addition, it is proved that the intersection density of transitive groups of prime power degree  is $1$.
\end{abstract}

\begin{quotation}
\noindent {\em Keywords:} 
intersection density, derangement, derangement graph, transitive permutation group.
\end{quotation}

\begin{quotation}
\noindent 
{\em Math. Subj. Class.:} 05C25, 20B25.
\end{quotation}


\section{Introductory remarks}
\label{sec:intro}
\noindent

For a finite set $V$ let   $\mathrm{Sym}(V)$ and  $\mathrm{Alt}(V)$ 
denote the corresponding symmetric group
and alternating group on $V$. (Of course, if $|V|=n$ the 
standard notations $S_n$, $A_n$ apply.)
Let $G\le \mathrm{Sym}(V)$ be a permutation group acting on a set $V$.
Two elements $g,h\in G$ 
are said to be {\em intersecting} if $g(v) = h(v)$ for some $v \in V$.
Furthermore, a subset ${\cal F}$ of $G$ is an {\em intersecting set}
if every pair of elements of ${\cal F}$ is intersecting.
The {\em intersection density} $\rho({\cal F})$ of the intersecting set ${\cal F}$
is defined to be the quotient  
$$
\rho({\cal F})=\frac{|{\cal F}|}{\max_{v\in V}|G_v|},
$$ 
and the {\em intersection density} $\rho(G)$ of a group $G$,
 first defined by Li, Song and Pantangi in \cite{LSP}, 
is the maximum value of 
$\rho({\cal F})$ where ${\cal F}$ runs over all intersecting sets in 
$G$, that is,
$$
\rho(G)=\max\{\rho({\cal F})\colon {\cal F}\subseteq G, {\cal F} \textrm{ is intersecting}\} = \frac{\max\{|{\cal F}| \colon {\cal F}\subset G\textrm{ is intersecting}\}}{\max_{v\in V}|G_v|}.
$$
Observe that, since $G_v$ is an intersecting set in $G$,
we have $\rho(G)\ge 1$.  
Observe also that for  a transitive group $G$
acting on a set $V$ we have $\rho(G)= 1$
if and only if 
the maximum cardinality of the intersecting set is $|G|/|V|$, in which case
we say that $G$ has the {\em Erd\"os-Ko-Rado property} or 
{\em EKR-property} in short. 
Moreover, $G$ has the {\em strict-EKR-property} if 
the canonical intersecting sets are the only maximum intersecting sets of $G$,
where a {\em canonical} intersecting set is an intersecting set of the form
$gG_v$, $v\in V$ and $g\in G$. 

Following \cite{MRS21} we define ${\cal I}_n$ to be the set
of all intersection densities of transitive permutation groups
of degree $n$, that is,
$$
{\cal I}_n=\{\rho(G) \mid G\textrm{ transitive of degree } n\},
$$ 
and we let $I(n)$ to be the maximum value in ${\cal I}_n$.

Motivation for this paper comes from \cite[Conjectures~6.(3) and {\color{red}6.}(4)]{MRS21}
and \cite[Question~7.1]{R21}. 

\begin{conjecture}
\label{conj:pk} {\rm\cite[Conjecture~6.6(3)]{MRS21}}
 If $n$ is a prime power, then $I(n)=1$.
\end{conjecture}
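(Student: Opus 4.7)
The plan is to bound the independence number of the derangement graph $\Gamma_G$ of $G$ via the clique-coclique inequality for vertex-transitive graphs. Writing $n=p^k$, I aim to produce a sharply transitive subset of $G$, that is, a set of $n$ elements pairwise disagreeing at every point of $V$, which is exactly a clique of size $n$ in $\Gamma_G$. Once such a clique is found, clique-coclique gives $\alpha(\Gamma_G)\cdot n\le |G|$, hence $\alpha(\Gamma_G)\le |G_v|$, forcing every intersecting set to have size at most $|G_v|$ and so $\rho(G)=1$. The standard Sylow calculation reduces the problem to $p$-groups: since $|G|=|G_v|\cdot p^k$, a Sylow $p$-subgroup $P$ satisfies $|P\cap G_v|\le |P|/p^k$, so its orbit on any $v\in V$ has size at least $p^k$ and $P$ is already transitive on $V$. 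Thus it suffices to prove the key lemma: every transitive $p$-group on $p^k$ points contains a sharply transitive subset.

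I would prove this lemma by induction on $k$. For $k=1$, take $g\in P$ of order $p$ (Cauchy); orbit-size considerations together with faithfulness force $g$ to be a $p$-cycle, so $\{e,g,\ldots,g^{p-1}\}$ works. For $k\ge 2$, choose a central subgroup $N\le Z(P)$ of order $p$. Normality of $N$ combined with faithfulness of $P$ on $V$ yields $N\cap P_v=1$, so $N$ is semi-regular and its orbits form a block system of $p^{k-1}$ blocks of size $p$. Let $K$ be the kernel of the induced action of $P$ on $V/N$; then $N\le K$ and $P/K$ is a faithful transitive $p$-group on $V/N$, to which the inductive hypothesis gives a sharply transitive subset $\bar T$ of size $p^{k-1}$. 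Choose any system of lifts $T\subset P$ of $\bar T$ under the projection $P\to P/K$, and form $NT$.

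Centrality of $N$ gives $(n_1 t_1)(n_2 t_2)^{-1}=(n_1 n_2^{-1})(t_1 t_2^{-1})$, and this identity does the work. If $t_1=t_2$ and $n_1\ne n_2$ the difference lies in $N\setminus\{e\}$ and is a derangement because $N$ is semi-regular; if $t_1\ne t_2$ then $t_1 t_2^{-1}$ acts on $V/N$ as a derangement by sharp transitivity of $\bar T$, while $n_1 n_2^{-1}\in N$ acts trivially on $V/N$, so the product permutes the $N$-blocks fixed-point-freely and is therefore a derangement on $V$. Finally, since $N\le K$ the cosets $Nt$ for $t\in T$ are pairwise distinct, giving $|NT|=|N|\cdot|T|=p^k$ and completing the induction.

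The main conceptual obstacle is the $p$-group lemma itself; inside it, the two delicate points are checking $N\le K$ (so the lifts $T$ automatically lie in distinct $N$-cosets, giving $|NT|$ the correct size) and using centrality of $N$ to separate $n_1 n_2^{-1}$ from $t_1 t_2^{-1}$. Once the lemma is established the EKR conclusion is immediate from the clique-coclique bound. This strategy says nothing about whether maximum intersecting sets must be canonical (strict-EKR), but Conjecture~\ref{conj:pk} does not ask for that.
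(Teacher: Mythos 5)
Your proof is correct, but it takes a different route through the inductive step than the paper does. Both arguments first pass to a Sylow $p$-subgroup $P$ (your orbit-counting argument is exactly the standard proof of Wielandt's theorem that $P$ is transitive, which the paper simply cites) and both induct along the central series, quotienting by (a subgroup of) the centre. The difference is in what is transported through the quotient. The paper proves a density-transfer lemma (Lemma~\ref{lem:semiq}): if the blocks are the orbits of a semiregular subgroup $H$, then $\rho(G)\le\rho(\G)$, proved by a coset-counting argument on an arbitrary intersecting set; this lemma is then reused in the degree-$2p$ analysis, so it earns its keep twice. You instead push a \emph{clique} up the tower: you show by induction that every transitive $p$-group of degree $p^k$ contains a sharply transitive subset $NT$ of size $p^k$, and then invoke the clique-coclique bound (Proposition~\ref{pro:coclique}) once at the end. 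Your inductive step is sound: $N\le Z(P)$ of order $p$ is semiregular by faithfulness and normality, $N\le K$ guarantees $|NT|=p^k$, and the two cases ($t_1=t_2$ versus $t_1\ne t_2$, the latter handled by looking at the image modulo $K$) cover all pairs. Your approach yields the slightly stronger structural fact that $\omega(\Gamma_P)$ equals the degree $p^k$ (in the spirit of Proposition~\ref{pro:semiregular}, though $NT$ need not be a subgroup), whereas the paper's lemma is the more flexible tool since it applies even when no clique of full size exists in the quotient. One cosmetic remark: centrality of $N$ is used only to guarantee that such a normal semiregular subgroup exists; the verification that $NT$ is a clique really only needs $N\le K$ and normality, since the $t_1\ne t_2$ case is decided in $P/K$ and the $t_1=t_2$ case gives $n_1n_2^{-1}$ without commuting anything past anything.
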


\begin{conjecture}
\label{conj:2p} {\rm\cite[Conjecture~6.6(4)]{MRS21}}
 If $n=2p$ where $p$ is a prime, then $I(n)=2$.
\end{conjecture}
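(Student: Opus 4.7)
The plan is to prove $I(2p)=2$ for odd primes $p$ (the case $p=2$ yields $I(4)=1$ by direct check) via matching bounds: $\rho(G)\le 2$ for every transitive $G$ of degree $2p$, together with an explicit construction attaining $\rho=2$.

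For the upper bound I would split on primitivity. Primitive groups of degree $2p$ are severely constrained by the classical analysis of primitive groups of small degree, refined through CFSG (Wielandt, Liebeck--Praeger--Saxl, and others): apart from $A_{2p}$ and $S_{2p}$ only a short list of almost simple exceptions arises. EKR for $A_{2p}$ and $S_{2p}$ is the classical Cameron--Ku / Godsil--Meagher result, and the remaining primitive cases have EKR verified separately in the literature (e.g.\ Meagher--Spiga for $\PGL(2,q)$ on the projective line, Ahmadi--Meagher for alternating/symmetric groups on small uniform subsets), so $\rho(G)=1$ when $G$ is primitive. If $G$ is imprimitive, fix a block system $\B$ with $|\B|\in\{2,p\}$, let $\pi\colon G\to\tilde G\le\Sym(\B)$ be the action on blocks with kernel $N=\ker\pi$, and observe that if $\F\subseteq G$ is intersecting on $V$ then (a) $\pi(\F)$ is intersecting for $\tilde G$ on $\B$, and (b) each slice $\F\cap Ng$, translated to $N$, is intersecting for $N$ on $V$. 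Applying Conjecture~\ref{conj:pk} to the prime-degree action of $\tilde G$ on $\B$ gives $|\pi(\F)|\le|\tilde G_B|$, and a short case analysis on the embedding $N\hookrightarrow\prod_{B\in\B}\Sym(B)$ (split according to whether the unique derangement acting as a swap on every block lies in $N$) yields an intersecting subset of $N$ of size at most $2|N_v|$. Using $|G_v|=|\tilde G_B|\cdot|N_v|$, the decomposition
\begin{equation*}
|\F|\;=\;\sum_{Ng}\,|\F\cap Ng|\;\le\;|\pi(\F)|\cdot\max_{g}\,|\F\cap Ng|
\end{equation*}
then gives $|\F|\le 2|G_v|$, i.e.\ $\rho(G)\le 2$.

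For the lower bound the cleanest example is $p=3$: take $G=A_4$ acting on the six $2$-element subsets of $\{1,2,3,4\}$, so $|G_v|=2$, and let $\F$ be the normal Klein four subgroup $\{e,(12)(34),(13)(24),(14)(23)\}$. Each non-identity element of $\F$ is a double transposition stabilising exactly two of the six pairs, and since $\F$ is a subgroup the set $\F\F^{-1}=\F$ consists entirely of non-derangements; hence $\F$ is intersecting of size $4$, and $\rho(A_4)\ge 2$, giving $I(6)=2$. For odd primes $p\ge 5$ the goal is to construct an analogous imprimitive $G$ of degree $2p$ containing a subgroup $V\le G$ of order $2|G_v|$ whose non-identity elements all have fixed points on $V$; such $V$ is built inside a wreath-product-style construction by pairing a block point-stabiliser with a carefully chosen block-swap involution whose support avoids a selected vertex.

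The main obstacle I expect is the imprimitive upper bound when $|\B|=p$ and the quotient $\tilde G$ is an almost simple primitive group of prime degree: here the kernel $N\le\prod_B\Sym(B)$ can support substantial intersecting subsets that mix per-block swaps with cross-block behaviour, and pushing the combined bound down to $2|G_v|$ requires both the full strength of the Burnside--Schur classification of transitive groups of prime degree (with CFSG input) and a delicate simultaneous analysis of $\pi(\F)$ and the $N$-coset slices of $\F$.
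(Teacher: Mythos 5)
Your overall architecture (a uniform upper bound $\rho(G)\le 2$ plus an explicit example attaining it) is the right one, but both halves have real gaps. On the lower bound, for $p\ge 5$ you never actually produce the example, and this is not a formality: the most natural candidate, the full wreath product $\ZZ_2\wr\ZZ_p$ of order $2^pp$, fails, because it contains a regular cyclic subgroup (generated by the product of all $p$ block-swaps together with the $p$-cycle) and hence has intersection density $1$. The construction that works (Theorem~\ref{the:main3}(i)) is to discard the odd-weight part: take $E\cong\ZZ_2^p$ generated by the swaps $(x_i\ y_i)$, set $K=E\cap\mathrm{Alt}(V)\cong\ZZ_2^{p-1}$, and let $G=K\rtimes\ZZ_p$. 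Every non-identity element of $K$ is a product of an even number, hence at most $p-1$, of the swaps, so it fixes a block pointwise; thus $K$ is an intersecting set of size $2^{p-1}=2|G_v|$ and $\rho(G)=2$. Your phrase ``a carefully chosen block-swap involution whose support avoids a selected vertex'' does not capture the essential point, which is that one must take the entire even-weight subgroup and exclude the full swap.

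On the upper bound there are two problems. First, your claim that $\rho(G)=1$ for every primitive $G$ of degree $2p$ is false: by Liebeck--Saxl the simply primitive groups of degree $2p$ are exactly $A_5$ and $S_5$ on the ten pairs from $\{1,\dots,5\}$, and $\rho(A_5)=2$ there (any $A_4\le A_5$ is an intersecting set of size $12=2|G_v|$). This does not damage the inequality $\rho\le 2$, but it falsifies a step you assert. Second, for blocks of size $p$ the quotient is $S_2$, so your decomposition reduces everything to bounding an intersecting subset of a single coset of the kernel $N\le\Sym(B)\times\Sym(B')$ by $2|N_v|=2|N|/p$; your proposed dichotomy about ``the unique derangement acting as a swap on every block'' only makes sense for blocks of size $2$ and gives nothing here, so this case is genuinely open in your write-up. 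The clean way to close it --- and in fact the whole upper bound, as done in \cite{R21} and in the proof of Theorem~\ref{the:main3} --- is Maru\v si\v c's theorem that every transitive group of degree $2p$ contains a $(2,p)$-semiregular element: the cyclic group it generates is a clique of size $p$ in $\Gamma_G$, so Proposition~\ref{pro:coclique} gives $\alpha(\Gamma_G)\le|G|/p=2|G_v|$ in one line, with no case split on primitivity or block sizes. (Note also that the present paper does not reprove the conjecture as such; it cites \cite{R21} for it and establishes the sharper classification of Theorem~\ref{the:main3}.)
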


Conjecture~\ref{conj:2p} is settled in \cite{R21},
where an additional  problem regarding the possible values of 
intersection densities in ${\cal I}_{2p}$ was posed.

\begin{question}
\label{que:2p}
{\rm \cite[Question~6.1]{R21}}
Does there exist an odd prime $p$ and a transitive group $G$ of degree $2p$
such that $\rho(G)$ is not an integer?
\end{question}

In this paper we settle Conjecture~\ref{conj:pk} and 
give a negative answer 
to Question~\ref{que:2p} by obtaining 
a complete classification of intersection densities of transitive groups 
of degree twice a prime. 

\begin{theorem}
\label{the:main2}
For a transitive permutation group $G$ of prime power degree 
the intersection density $\rho(G)$ is equal to $1$. 
\end{theorem}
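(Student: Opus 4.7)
The plan is to work with the \emph{derangement graph} $\Gamma_G$ of $G$: vertex set $G$, with $g$ and $h$ adjacent iff $gh^{-1}$ is a derangement. Intersecting sets in $G$ are precisely independent sets of $\Gamma_G$, and since $G_v$ itself is intersecting we already have $\alpha(\Gamma_G)\ge|G_v|=|G|/n$ (where $n=p^k$); the theorem is equivalent to the reverse inequality. Because the set of derangements is a union of $G$-conjugacy classes, $\Gamma_G$ is a normal Cayley graph, hence vertex-transitive, so the clique--coclique bound $\alpha(\Gamma_G)\,\omega(\Gamma_G)\le|G|$ applies. It therefore suffices to exhibit a clique of size $n$ in $\Gamma_G$, equivalently a sharply transitive subset of $G$ of cardinality $n$. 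The cleanest way to produce one is to find a \emph{regular} subgroup $R\le G$ of order $n$, for then $R\setminus\{1\}$ consists entirely of derangements and $R$ itself is the required clique.

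The entire argument thus reduces to the claim that every transitive permutation group of degree $n=p^k$ contains a regular subgroup. To prove this, I would first pass to a Sylow $p$-subgroup $P\le G$: comparing $p$-parts on both sides of $|G|=p^k|G_v|$, together with the observation that every $P$-orbit on $V$ has $p$-power size, forces $|P:P_v|=p^k$, so $P$ is already transitive on $V$, and the search can be conducted inside $P$. From here I would induct on $k$. The base case $k=1$ is immediate, since any transitive $p$-subgroup of $S_p$ contains a $p$-cycle which generates a regular $C_p$. For the inductive step I would exploit the nontrivial centre $Z=Z(P)$: any element of $Z$ fixing a point commutes with the transitive group $P$ and must therefore fix every point of $V$, so $Z$ is semiregular on $V$, and its orbits form a nontrivial $P$-invariant block system $\mathcal{B}$ with $|\mathcal{B}|=p^{k-j}$ and $|Z|=p^j$. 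The inductive hypothesis, applied to the induced transitive action on $\mathcal{B}$, yields a regular subgroup of the block action; combining it with $Z$ (which already acts regularly within each individual block) via an appropriate lift produces a regular subgroup of $P$ of order $p^k$.

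The main technical obstacle I anticipate lies in this final lifting step. While exhibiting a regular \emph{transversal} is easy at the set-theoretic level, arranging the lift so that it is closed under multiplication --- hence is a genuine subgroup and not merely a system of coset representatives --- requires a careful structural analysis of $P$ over $Z(P)$ (or, more generally, over the kernel of the block action). This is where the real work of the argument sits.
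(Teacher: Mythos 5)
Your reduction to a transitive Sylow $p$-subgroup $P$ and your use of the semiregular centre $Z(P)$ to build a block system follow the paper exactly; the divergence, and the fatal gap, is in what you try to transport through the quotient. You reduce the whole theorem to the claim that \emph{every transitive $p$-group of degree $p^k$ contains a regular subgroup}, and you correctly flag the ``lifting step'' as the place where the real work sits --- but that step cannot be carried out, because the claim itself is false. Take $P=\ZZ_3\wr\ZZ_3$ of order $3^4$, with base group $B=\ZZ_3^3$ and top group generated by the coordinate shift $\sigma$, acting on the $27$ cosets of $H=\langle (1,2,0)\rangle\le B$. The conjugates of $H$ are $\langle(1,2,0)\rangle$, $\langle(0,1,2)\rangle$, $\langle(2,0,1)\rangle$, which intersect trivially, so the action is faithful of degree $3^3$. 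On the other hand $\Phi(P)=[P,P]$ is the ``coordinate-sum-zero'' hyperplane of $B$, which contains $H$; hence every maximal subgroup $M$ of $P$ contains $H$, so $MH=M\neq P$ and $M$ is intransitive. Thus $P$ has no proper transitive subgroup whatsoever, in particular no regular subgroup of order $27$ (and your inductive lift, which would have to produce one sitting over a regular subgroup of the block action, cannot exist). Retreating from ``regular subgroup'' to ``sharply transitive set'' (a clique of size $n$ in $\Gamma_P$, which is all the clique--coclique bound needs) does not obviously help either: nothing in your argument produces such a set, and its existence in every transitive $p$-group is not known.

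The paper's proof avoids this entirely by transporting the \emph{intersection-density bound} rather than a clique. Its Lemma~\ref{lem:semiq} shows directly that if $H$ is semiregular with orbits forming a $G$-invariant partition $\B$, then $\rho(G)\le\rho(\G)$: for an intersecting set $\F$ one checks that $|\F\cap gH|\le 1$ for every coset $gH$ (two elements of a common coset differ by a conjugate of a nontrivial semiregular element, hence by a derangement), and that the image $\overline{\F}$ is intersecting in $\G$; counting cosets of $H$ inside cosets of the kernel $K$ then gives $|\F|\le \frac{|K|}{|H|}|\overline{\F}|\le\rho(\G)\,|G_v|$. Applying this with $H=Z(P)$ and inducting on $|P|$ yields $\rho(P)\le\rho(Q)=1$ with no clique ever being constructed. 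If you want to salvage your write-up, you should replace the regular-subgroup claim by a lemma of this counting type; as it stands, the proposal rests on a false statement.
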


\begin{theorem}
\label{the:main3}
Let $G$ be a transitive permutation group of degree $2p$, where $p$ is a prime.
Then the intersection density $\rho(G)$ is  either  $1$ or $2$. More precisely, $\rho(G)=2$ if and only if either 
\begin{enumerate}[(i)]
\itemsep=0pt
\item $G\cong K\rtimes H$ acting on a set $V=\{x_i\colon i\in\ZZ_p\}\cup\{y_i\colon i\in\ZZ_p\}$
where $K\le E\cap \mathrm{Alt}(V)$,  
$E\cong\ZZ_2^p$ is an elementary abelian $2$-group 
generated by the involutions $\epsilon_i=(x_i\ y_i)$, $i\in\ZZ_p$, 
and $H=\la (x_0\ x_1\ \ldots \ x_{p-1})(y_0\ y_1\ \ldots\  y_{p-1}) \ra\cong\ZZ_p$,
or
\item $G\cong A_5$ acting on a $10$-element set of pairs 
of $\{1,2,3,4,5\}$.
\end{enumerate}
\end{theorem}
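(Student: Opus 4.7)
The plan is to combine the upper bound $\rho(G)\le 2$ established in \cite{R21} with a case analysis based on the classification of transitive permutation groups of degree $2p$, and to pin down exactly which groups attain equality. The case $p=2$ is immediate since $2p=4$ is a prime power, giving $\rho(G)=1$ via Theorem~\ref{the:main2}. From now on $p$ is an odd prime, and we split $G$ into primitive and imprimitive cases; in the imprimitive case we further split by whether the block system has two blocks of size $p$ or $p$ blocks of size $2$.

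For the primitive case I would invoke the CFSG-based classification of primitive permutation groups of degree $2p$, which reduces the analysis to a short list of families. For $S_{2p}$, $A_{2p}$ and the other $2$-transitive members of that list, classical EKR-type results already give $\rho(G)=1$. For the remaining families---typically low-dimensional $\PSL$-type actions together with a handful of sporadic examples (e.g.\ Mathieu group actions of degree $22$)---I would apply the Hoffman ratio bound to the derangement graph, using the explicit permutation character, to force $\rho(G)=1$. The single exception is $A_5$ acting on the $10$ unordered pairs from $\{1,\dots,5\}$: here $|G_v|=6$, while the point-stabiliser $A_4\le A_5$ of any element of $\{1,\dots,5\}$ forms an intersecting set of size $12$ (each element of $A_4$ fixes at least one $2$-subset of $\{1,\dots,5\}$), yielding $\rho=2$ and producing case~(ii).

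In the imprimitive case with two blocks of size $p$, the index-$2$ subgroup $H\le G$ preserving both blocks acts transitively on each block of prime degree $p$. Theorem~\ref{the:main2} gives that both block actions are EKR, and a short argument combining these two EKR bounds with the block-swap coset would force any intersecting set of $G$ to have size at most $|G_v|$, yielding $\rho(G)=1$ in this subcase.

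The crux is the imprimitive case with $p$ blocks of size $2$. Here $G\le S_2\wr S_p$, the induced block action on $p$ points is EKR by Theorem~\ref{the:main2}, and the kernel $K$ of this block action sits inside the elementary abelian $2$-group $E\cong\ZZ_2^p$ generated by the block-swaps $\epsilon_i=(x_i\ y_i)$. A key observation is that the unique derangement in $E$ is $\prod_{i=0}^{p-1}\epsilon_i$, which is an \emph{odd} permutation since $p$ is odd; hence if $K\subseteq\mathrm{Alt}(V)$ then this derangement lies outside $K$, so $K$ itself is an intersecting set of size $2|G_v|$, already producing $\rho(G)\ge 2$. The hard direction is the converse: assuming $\rho(G)=2$, one projects a maximum intersecting set to the block action (where it is essentially canonical by EKR) and pulls the structure back to show that $G$ must contain a cyclic $H\cong\ZZ_p$ acting as a full $p$-cycle on the blocks, that $K\subseteq\mathrm{Alt}(V)$, and that $G=K\rtimes H$---precisely case (i). The main obstacle will be precisely this last analysis: simultaneously forcing the $\ZZ_p$-complement structure and the parity constraint on $K$, while verifying that no further exceptional configurations emerge outside (i) and (ii).
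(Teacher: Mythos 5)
Your overall architecture (the upper bound $\rho(G)\le 2$, the split into primitive / two blocks of size $p$ / $p$ blocks of size $2$, the $A_5$ exception, and the parity observation that $\prod_i\epsilon_i$ is the unique derangement of $E$ and is odd) matches the paper, but the proposal has a genuine gap exactly where you flag "the main obstacle": the converse direction in the blocks-of-size-$2$ case is the entire content of the theorem there, and "project a maximum intersecting set to the block action, where it is essentially canonical by EKR, and pull the structure back" does not work as stated. The projection $\overline{\F}$ of an intersecting set is an intersecting set of $\G$, but EKR for $\G$ only bounds $|\overline{\F}|$, not $|\F|$; to control the fibres one needs a semiregular subgroup of $K$ whose orbits are the blocks (as in Lemma~\ref{lem:semiq}), and that subgroup is generated by $\prod_i\epsilon_i$ -- precisely the element that is \emph{absent} when $K\le\mathrm{Alt}(V)$, i.e.\ in the cases you are trying to analyse. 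The paper instead proves Lemma~\ref{lem:blocks of size 2 induced action solvable} by a direct coset decomposition $\F=(\F\cap K)\cup\F_0\cup\dots\cup\F_{p-1}$ with three nontrivial counting claims, valid when $\G\cong\ZZ_p\rtimes\ZZ_d$ is not doubly transitive, and then disposes of doubly transitive $\G$ by passing to the preimage of $N_{\G}(\bar P)$ (which is a proper, non-cyclic, non-doubly-transitive overgroup of $\bar P$ by Lemma~\ref{lem:prime}) and invoking monotonicity (Proposition~\ref{pro:minimal}). Neither the coset analysis nor the doubly-transitive reduction appears in your sketch, and without them you cannot rule out $\rho(G)=2$ for, say, non-cyclic solvable $\G$ or doubly transitive $\G$ with $K\le\mathrm{Alt}(V)$.

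Two smaller points. In the primitive case you conflate the simply primitive and doubly transitive families: by CFSG the only simply primitive groups of degree $2p$ are $A_5$ and $S_5$ on pairs, while PSL-type and Mathieu actions of degree $2p$ are doubly transitive and hence already have $\rho=1$ by the standard fact (Proposition~\ref{pro:2-transitive}); no Hoffman bound is needed, though you do need an explicit clique (or computation) to settle $S_5$ on pairs. In the two-blocks-of-size-$p$ case your "short argument combining two EKR bounds with the block-swap coset" is unsubstantiated (the intersecting condition does not decouple into separate EKR problems on the two blocks, since $fg^{-1}$ may fix a point in only one of them); the paper gets this case in one line from Lemma~\ref{lem:semiq} applied to the $(2,p)$-semiregular subgroup, or equivalently from the clique $\{\pi^i\}\cup\{g\pi^i\}$ of size $2p$ in the derangement graph.
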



\section{Preliminaries}
\label{sec:pre}
\noindent

\subsection{(Im)primitivity of transitive permutation groups}
\label{ssec:definition}
\noindent

Let $G$ be a 
transitive permutation group $G$ acting on a set $V$.
A partition $\B$ of $V$ is called $G$-{\em invariant}
if the elements of $G$ permute the parts, the so called
{\em blocks} of $\B$, setwise.
If the trivial partitions $\{V\}$ and $\{\{v\}: v \in V\}$ 
are the only
$G$-invariant partitions of $V$, then $G$ is {\em primitive},
and is {\em imprimitive} otherwise.
In the latter case the corresponding
$G$-invariant partition will be referred to as the
{\em complete imprimitivity block system} of $G$.  
We say that $G$ is {\em doubly transitive} if given any two ordered pairs
$(u,v)$ and $(u',v')$ of  elements $u,v,u',v'\in V$, such that $u\ne v$ and $u'\ne v'$,
there exists an element  $g\in G$ such that $g(u,v)=(u',v')$.
Note that a doubly transitive group is primitive.
A primitive group which is not doubly transitive is called {\em simply primitive}.

The following result about normalizers of Sylow $p$-subgroups
in doubly transitive groups of prime degree will be needed
in the proof of Theorem~\ref{the:main3}.

\begin{lemma}\label{lem:prime}
Let $G$ be a doubly transitive group of prime degree $p$.
Then a Sylow $p$-subgroup $P$ of $G$
is strictly contained in its normalizer $N_G(P)$.
\end{lemma}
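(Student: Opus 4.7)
My plan is to argue by contradiction, reducing the assumption $N_G(P)=P$ to the existence of a normal $p$-complement via Burnside's transfer theorem, and then ruling that out on transitivity/faithfulness grounds.

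I would start with the observation that $|G|$ divides $p!$ (since $G\le \Sym(V)$ with $|V|=p$), so the Sylow $p$-subgroup $P$ has order exactly $p$; in particular $P$ is cyclic and abelian. Suppose, for contradiction, that $N_G(P)=P$. Abelianness gives $P\le C_G(P)\le N_G(P)=P$, so $C_G(P)=N_G(P)=P$, and hence $P\le Z(N_G(P))$. Burnside's normal $p$-complement theorem then yields a normal subgroup $N$ of $G$ with $G=N\rtimes P$, and $|N|=|G|/p$ is coprime to $p$.

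Next I would exploit normality together with transitivity on $V$: all $N$-orbits on $V$ share a common size $k$, and $k$ divides both $|N|$ and $|V|=p$. Since $p$ is prime and coprime to $|N|$, this forces $k=1$, so $N$ fixes every point of $V$. Faithfulness of the action then gives $N=1$, and hence $|G|=p$. But double transitivity on $V$ requires $|G|\ge p(p-1)$, which is a contradiction as soon as $p\ge 3$. (The statement fails for $p=2$ since then $G=S_2=P$, but this case is irrelevant in the intended application to Theorem~\ref{the:main3}.)

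The decisive step, and the one I would expect to be the main obstacle in any more elementary attempt, is the appeal to Burnside's transfer theorem: the local hypothesis $N_G(P)=P$ gives essentially no information about the action of $G$ on $V$ by itself, and only after it is upgraded to the existence of a genuine normal complement does the orbit-size argument above kick in. Everything else in the proof is routine bookkeeping with Sylow and orbit-stabilizer.
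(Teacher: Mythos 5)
Your proof is correct (for odd $p$, which is the only case needed), but it takes a genuinely different route from the paper. You reduce the self-normalizing hypothesis $N_G(P)=P$ to $N_G(P)=C_G(P)$ (using $|P|=p$, hence $P$ abelian) and invoke Burnside's normal $p$-complement theorem to produce a normal subgroup $N$ of order coprime to $p$; the paper instead looks at the conjugation action of $G$ on its Sylow $p$-subgroups, observes that $N_G(P)=P$ makes this a Frobenius action, and invokes Frobenius's theorem to produce a regular normal subgroup $T$ with $|T|\equiv 1 \pmod p$. Both arguments then finish the same way, via the fact that a normal subgroup of a transitive group of prime degree is transitive or trivial; in the paper this immediately contradicts $p\nmid |T|$, while in your version it forces $N=1$ and $|G|=p$, which is where you finally use double transitivity. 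Your appeal to double transitivity is more explicit than the paper's (which uses it only implicitly, to guarantee there is more than one Sylow $p$-subgroup so that the Frobenius setup applies), and your remark that the statement fails for $p=2$ is accurate and harmless for the intended application. The trade-off is simply which classical theorem one leans on: transfer (Burnside) versus the existence of the Frobenius kernel (character theory); both are standard, and neither argument is more elementary than the other in any essential sense.
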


\begin{proof}
Let $G$ be a doubly transitive group of prime degree $p$
acting on a set $V$.
Consider the action of $G$ on the set $\cal{P}$ of all Sylow $p$-subgroups
of $G$ by conjugation. By Sylow theorems this action is transitive 
with $N=N_G(P)$ as the corresponding stabilizer of $P$. 
If $N=P$ then the intersection of any two stabilizers 
of this action is trivial, and so $G$  acts on $\cal{P}$ as Frobenius group.
It follows that $G$ contains a regular normal subgroup $T$ of order 
$|{\cal P}|\equiv {1\pmod p}$. 
Now consider the action of $T$ on the set $V$. Since $T$ is a normal
subgroup of a transitive group $G$ of prime degree it follows that $T$
is transitive on $V$, a contradiction since $|T|$ is not divisible by $p$.
\end{proof}

\subsection{Derangement  graphs}
\noindent

The intersection density of a permutation group can be studied via
{\em derangements}, that is,  fixed-point-free elements of 
$G$. 
Let ${\cal D}$ be the set of all derangements 
of a permutation group $G$. Then following \cite{MRS21} 
we define the {\em derangement graph} of $G$
to be the graph $\Gamma_G=\Cay(G,{\cal D})$ with vertex set $G$
and edge set consisting of all pairs $(g,h)\in G\times G$ such that
$gh^{-1}\in {\cal D}$. Therefore $\Gamma_G$ is the Cayley graph of $G$
with connection set ${\cal D}$, which is a loop-less simple graph
since ${\cal D}$
does not contain the identity element of $G$ and  
${\cal D}$ is inverse-closed. In the terminology of the
derangement graph an intersecting set of $G$ is an 
independent set or a coclique of $\Gamma_G$.
Since, by a classical theorem of Jordan \cite[Th\'eor$\grave{e}$me~I]{J72},   
a transitive permutation group $G$ on a finite set $V$ 
of cardinality at least $2$
contains   derangements, we have $\rho(G)<|V|$.
(Note also, that by a theorem of Fein, Kantor and Schacher \cite[Theorem 1]{FKS81}, every transitive permutation group contains a derangement of prime power order.)  

The following classical upper bound on the size of the largest coclique in vertex-transitive graphs turns out to be very useful when considering the intersection densities of  permutation groups. Namely, 
the derangement graph $\Gamma_G$ of a permutation group $G$ 
is always vertex-transitive.

\begin{proposition}
\label{pro:coclique}
{\rm \cite{GM16}}
Let $\Gamma$ be a vertex-transitive graph. 
Then the largest coclique in $\Gamma$ is of size  $\alpha(\Gamma)$
bounded by
$$
\alpha(\Gamma)\le\frac{|V(\Gamma)|}{\omega(\Gamma)},
$$
where $\omega(\Gamma)$ is the size of a maximum clique in $\Gamma$.
\end{proposition}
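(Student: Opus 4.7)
The plan is to apply the standard clique--coclique double-counting argument that works in any vertex-transitive graph. Fix a subgroup $G\le \Aut(\Gamma)$ acting transitively on $V(\Gamma)$ (for instance $G=\Aut(\Gamma)$ itself), a maximum clique $C$ of size $\omega(\Gamma)$, and a maximum coclique $S$ of size $\alpha(\Gamma)$. The bound will drop out of counting, in two different ways, the pairs $(g,v)\in G\times S$ with $v\in gC$.

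First I would record the basic incidence inequality: for every $g\in G$ the translate $gC$ is again a clique (since $g$ is an automorphism), and the intersection of any clique with any coclique has at most one vertex, because it is simultaneously a clique and a coclique. Hence $|gC\cap S|\le 1$ for every $g\in G$, and summing over $G$ gives
$$
|\{(g,v)\in G\times S : v\in gC\}|=\sum_{g\in G}|gC\cap S|\le |G|.
$$
Second, I would compute the same cardinality by fixing $v\in S$ first. By transitivity of $G$ on $V(\Gamma)$, the orbit-stabilizer theorem says that each vertex of $V(\Gamma)$ is the image of each fixed vertex under exactly $|G|/|V(\Gamma)|$ elements of $G$; consequently the number of $g\in G$ with $g^{-1}v\in C$ equals $|C|\cdot |G|/|V(\Gamma)|$, independently of $v$. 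Summing over $v\in S$ therefore gives
$$
|\{(g,v)\in G\times S : v\in gC\}|=|S|\cdot|C|\cdot\frac{|G|}{|V(\Gamma)|}.
$$

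Combining the two evaluations and cancelling $|G|$ yields $|S|\cdot|C|\le |V(\Gamma)|$, which is exactly $\alpha(\Gamma)\le |V(\Gamma)|/\omega(\Gamma)$. There is no real obstacle in this argument: the only delicate ingredient is the orbit-counting identity used in the second evaluation, which is an immediate consequence of transitivity. Everything else is formal, which is consistent with this being a textbook bound attributed in the reference to Godsil and Meagher.
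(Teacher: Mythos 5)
Your argument is correct and is precisely the standard clique--coclique double-counting proof found in the cited reference; the paper itself states this as Proposition from Godsil--Meagher without reproducing a proof. The one step worth double-checking, the count of $g\in G$ with $g^{-1}v\in C$, is handled correctly via orbit--stabilizer, so there is nothing to fix.
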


\subsection{Intersection density of transitive groups}
\noindent

\begin{proposition}
\label{pro:1}
Let $G$ be a transitive permutation group  and $\F$ an intersecting
set of $G$. Then there exists an intersecting
set $\F'$ suh that $|\F|=|\F'|$ and $1\in \F'$.
\end{proposition}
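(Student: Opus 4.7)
The plan is to obtain $\F'$ from $\F$ by left translation. Assuming $\F$ is nonempty (otherwise the statement is vacuous in spirit), pick any $g \in \F$ and set $\F' = g^{-1}\F = \{g^{-1}f : f \in \F\}$. Immediately $|\F'| = |\F|$, since left multiplication by $g^{-1}$ is a bijection on $G$, and $1 = g^{-1}g \in \F'$ because $g \in \F$. So the two auxiliary conditions hold by construction.

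The only thing to verify is that $\F'$ is still intersecting. This is the sole content of the proof, and it amounts to observing that the intersecting relation is preserved under left translation by any element of $G$. Indeed, if $h_1 = g^{-1}f_1$ and $h_2 = g^{-1}f_2$ are two elements of $\F'$, the fact that $f_1,f_2 \in \F$ are intersecting provides a $v \in V$ with $f_1(v) = f_2(v)$, and applying $g^{-1}$ to both sides yields $h_1(v) = h_2(v)$. So $h_1$ and $h_2$ are intersecting, and $\F'$ is an intersecting set.

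There is no real obstacle here; the point to highlight is merely that, since ``intersecting'' is defined pointwise via the equation $g(v) = h(v)$, applying any fixed permutation $g^{-1}$ to both sides preserves the relation, so left cosets of intersecting sets are intersecting. Transitivity of $G$ is not actually needed for this statement, but is part of the standing framework in which the intersection density is defined.
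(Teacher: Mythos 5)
Your proof is correct and uses essentially the same construction as the paper: translate $\F$ on the left by the inverse of one of its elements. The only (cosmetic) difference is in verifying that $\F'$ is intersecting — you apply $g^{-1}$ directly to both sides of $f_1(v)=f_2(v)$, while the paper observes that $(g^{-1}f_1)(g^{-1}f_2)^{-1}=g^{-1}f_1f_2^{-1}g$ is a conjugate of a non-derangement; these are two phrasings of the same fact.
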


\begin{proof}
Take an element $f\in \F$ and let $\F'=f^{-1}\F$.
Then $1\in\F'$ and since for $g_1,g_2\in\F$ the element
$f^{-1}g_1(f^{-1}g_2)^{-1}=f^{-1}g_1g_2^{-1}f$
is not a derangement (as it is a conjugate of a non-derangement)
we can conclude that $\F'$ is an intersecting set of $G$.
\end{proof}

The following observation regarding intersection density of doubly transitive permutation groups was made in \cite{MRS21}.

\begin{proposition}
\label{pro:2-transitive}
{\rm \cite[Lemma~2.1(3)]{MRS21}}
If $G$ is doubly transitive permutation group then $\rho(G)=1$.
\end{proposition}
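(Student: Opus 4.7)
The plan is to apply the Hoffman ratio bound to the derangement graph $\Ga_G$. Since whether or not an element of $G$ is a derangement depends only on its conjugacy class, the connection set ${\cal D}$ is a union of conjugacy classes and $\Ga_G$ is a \emph{normal} Cayley graph; its spectrum therefore consists of the numbers
\[
\lambda_\chi \;=\; \frac{1}{\chi(1)} \sum_{g \in {\cal D}} \chi(g),
\]
one for each irreducible complex character $\chi$ of $G$, appearing with multiplicity $\chi(1)^2$. The trivial character yields the valency $\lambda_{\mathbf{1}} = |{\cal D}|$, so the task is to bound the minimum eigenvalue $\lambda_{\min}$ from below.

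First I would compute $\lambda_\chi$ for the \emph{standard} character $\chi = \pi - \mathbf{1}$, where $\pi(g) = |\Fix(g)|$ is the permutation character on $V$. By double transitivity, $\chi$ is irreducible of degree $n - 1$, where $n = |V|$. Since $\chi(g) = -1$ on every derangement, one immediately has $\sum_{g \in {\cal D}} \chi(g) = -|{\cal D}|$, and therefore
\[
\lambda_\chi \;=\; -\frac{|{\cal D}|}{n-1}.
\]
If this $\lambda_\chi$ turns out to be $\lambda_{\min}$, the Hoffman ratio bound
\[
\alpha(\Ga_G) \;\le\; |G|\cdot\frac{-\lambda_{\min}}{|{\cal D}| - \lambda_{\min}}
\]
evaluates exactly to $|G|/n = |G_v|$, yielding $\rho(G) \le 1$; combined with the automatic inequality $\rho(G) \ge 1$, this gives $\rho(G) = 1$.

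The main obstacle is precisely verifying that $\lambda_\chi$ is indeed $\lambda_{\min}$, equivalently that $\sum_{g \in {\cal D}} \chi'(g) \ge -\chi'(1)\,|{\cal D}|/(n-1)$ for every non-trivial irreducible $\chi' \ne \chi$. This bound is not automatic, and is handled by invoking the classification of finite $2$-transitive groups and verifying the character-sum estimate separately on each almost-simple and each affine family---this is the substance of the Meagher--Spiga--Tiep EKR theorem for $2$-transitive groups. A direct attempt through the clique-coclique bound of Proposition~\ref{pro:coclique} would require producing a sharply transitive subset of $G$ of cardinality $n$, i.e.\ an $n$-clique in $\Ga_G$, but such subsets need not exist inside every $2$-transitive group (for instance in certain $\PSL(2,q)$ of degree $q+1$), so this shortcut is not available in general.
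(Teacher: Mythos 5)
The paper offers no proof of this proposition: it is quoted verbatim from \cite[Lemma~2.1(3)]{MRS21}, which in turn rests on the Meagher--Spiga--Tiep Erd\H{o}s--Ko--Rado theorem for finite $2$-transitive groups. Your outline is a faithful reconstruction of how that theorem is actually proved --- the ratio-bound computation with the standard character $\pi-\mathbf{1}$ giving $-|{\cal D}|/(n-1)$ and the resulting bound $\alpha(\Ga_G)\le |G|/n$ are correct --- and you rightly identify that the only genuine content is verifying that this eigenvalue is minimal, which is done case-by-case via CFSG and cannot be shortcut through Proposition~\ref{pro:coclique}. In other words, your proposal defers to exactly the same external result the paper itself defers to, and is as complete as a blind proof of this statement can reasonably be.
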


The following result proved in \cite{MRS21} shows that it suffices to consider
minimal transitive subgroups when searching for the maximum value of ${\cal I}_n$.

\begin{proposition}
\label{pro:minimal}
{\rm \cite[Lemma~6.5]{MRS21}}
If $H\le G$ are transitive permutation groups then
$\rho(G)\le \rho(H)$.
\end{proposition}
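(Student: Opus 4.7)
The plan is to take a largest intersecting set $\F$ of $G$, slice it according to the left cosets of $H$ in $G$, and show that each slice, after translation back into $H$, becomes an intersecting set of $H$; then a simple counting argument using the transitivity of both groups yields the inequality.

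First I would fix an intersecting set $\F\subseteq G$ achieving $|\F|=\rho(G)|G_v|$ (here $v\in V$ is arbitrary since $G$ is transitive). For each left coset $gH$ of $H$ in $G$, set $\F_g:=g^{-1}(\F\cap gH)\subseteq H$. If $h_1,h_2\in\F_g$, write $h_i=g^{-1}a_i$ with $a_i\in\F$; then
$$h_1h_2^{-1}=g^{-1}(a_1a_2^{-1})g$$
is a conjugate of $a_1a_2^{-1}$ inside $\mathrm{Sym}(V)$, hence has the same cycle type. Since $\F$ is intersecting, $a_1a_2^{-1}$ has a fixed point on $V$, and therefore so does $h_1h_2^{-1}$. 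Consequently $\F_g$ is an intersecting set of $H$ acting on $V$, so $|\F_g|\le\rho(H)|H_v|$ by definition.

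Summing over the $[G:H]$ cosets gives
$$|\F|\;=\;\sum_{gH}|\F\cap gH|\;\le\;[G:H]\cdot\rho(H)\cdot|H_v|.$$
Since both $G$ and $H$ are transitive on $V$, we have $|G_v|=|G|/|V|$ and $|H_v|=|H|/|V|$, so $[G:H]\cdot|H_v|=|G_v|$. Dividing through by $|G_v|$ yields $\rho(G)\le\rho(H)$.

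The only subtlety, and the step worth pausing over, is the verification that translating a subset of a coset of $H$ back into $H$ preserves the intersecting property; this hinges on the conjugation-invariance of being a derangement, exactly the observation already recorded in the proof of Proposition~\ref{pro:1}. The remainder is routine bookkeeping with cosets and orbit-stabilizer, and no case analysis is needed.
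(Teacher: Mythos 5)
Your proof is correct: the coset-slicing argument, the conjugation-invariance of having a fixed point, and the orbit--stabilizer bookkeeping $[G:H]\cdot|H_v|=|G_v|$ all go through without gaps. The paper itself states this result only as a citation of \cite[Lemma~6.5]{MRS21}, and your argument is essentially the standard proof given there (and the same mechanism the paper reuses in the proof of Lemma~\ref{lem:semiq}), so nothing further is needed.
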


\begin{proposition}
\label{pro:semiregular}
Let $G$ be a transitive permutation group acting on a set $V$
and containing a semiregular subgroup $H$
with $k$ orbits on $V$. Then $\rho(G)\le k$.
In particular, if $H$ is regular then $\rho(G)=1$.
\end{proposition}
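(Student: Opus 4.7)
The plan is to interpret the conclusion through the derangement graph $\Gamma_G$ and then apply the ratio bound of Proposition~\ref{pro:coclique}. The key observation is that a semiregular subgroup produces a large clique in $\Gamma_G$, which by the ratio bound forces the coclique number, hence the intersection density, to be small.

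First I would unpack semiregularity: the hypothesis says that the stabilizer in $H$ of every point of $V$ is trivial, which is equivalent to saying that every non-identity element of $H$ is a derangement of $V$. Thus $H\setminus\{1\}$ is contained in the connection set $\mathcal D$ of $\Gamma_G=\Cay(G,\mathcal D)$, and consequently, for any two distinct $h_1,h_2\in H$ the product $h_1h_2^{-1}\in H\setminus\{1\}\subseteq \mathcal D$, so $h_1$ and $h_2$ are adjacent in $\Gamma_G$. Hence $H$ itself is a clique of size $|H|$ in $\Gamma_G$, giving $\omega(\Gamma_G)\ge |H|$.

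Next, I would assemble the counting ingredients. Since the orbits of $H$ all have size $|H|$ and there are $k$ of them, one has $|V|=k|H|$; and since $G$ is transitive, the orbit-stabilizer identity gives $|G_v|=|G|/|V|$ for every $v\in V$. Proposition~\ref{pro:coclique}, applied to the vertex-transitive graph $\Gamma_G$, then yields
$$
\alpha(\Gamma_G)\;\le\;\frac{|G|}{\omega(\Gamma_G)}\;\le\;\frac{|G|}{|H|},
$$
and dividing by $|G_v|$ produces
$$
\rho(G)\;=\;\frac{\alpha(\Gamma_G)}{|G_v|}\;\le\;\frac{|G|/|H|}{|G|/|V|}\;=\;\frac{|V|}{|H|}\;=\;k.
$$
If $H$ is regular then $k=1$, so $\rho(G)\le 1$, and combined with the universal lower bound $\rho(G)\ge 1$ noted just after the definition in Section~\ref{sec:intro}, this forces $\rho(G)=1$.

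I do not foresee a real obstacle in carrying this out: the whole argument is a clean application of the clique-coclique bound once one spots that semiregular subgroups contribute cliques to $\Gamma_G$. The only subtlety worth double-checking is the translation of the set-theoretic notion of semiregularity (free action on $V$) into the graph-theoretic statement that $H$ is a clique in $\Gamma_G$; without this dictionary the rest of the computation would have no content.
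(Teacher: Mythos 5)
Your proof is correct and follows essentially the same route as the paper: you observe that semiregularity makes $H$ a clique in the derangement graph, apply the clique--coclique bound of Proposition~\ref{pro:coclique}, and divide by $|G_v|=|G|/|V|$ to get $\rho(G)\le |V|/|H|=k$, finishing the regular case with the universal bound $\rho(G)\ge 1$. No issues.
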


\begin{proof}
Since $H$ is semiregular it follows that  
for any two different elements $g,h\in H$
the element $gh^{-1}\in H$ is semiregular. 
This implies that $H\subseteq V(\Gamma_G)$
induces a clique in $\Gamma_G$ of size $|H|$. Consequently,
Proposition~\ref{pro:coclique} implies that
$$
\alpha(\Gamma_G)\le\frac{|V(\Gamma_G)|}{|H|}=\frac{|G|}{|H|},
\ \
\textrm{ and so }
\ \ 
\rho(G)=\frac{\alpha(\Gamma_G)}{|G_v|}\le \frac{|G|}{|H||G_v|}=\frac{|V|}{|H|}=k.
$$
If $H$ is regular (that is, if $k=1$) then the above inequality 
gives $\rho(G)\le 1$. But as observed in the introduction the intersection
density is at least $1$ for any permutation group, and so 
we conclude that in this case $\rho(G)=1$.
\end{proof}

%





By the above proposition every transitive permutation group
admitting a regular subgroup has EKR-property.
Trivial examples of permutation groups with the
strict-EKR-property are  regular permutation groups.
Observe  that a
transitive permutation group $G$ admitting a regular subgroup 
of index $2$ also has the strict-EKR-property. 
Namely, if $\F$ is a maximal intersecting set of $G$  containing $1$
and $f\in \F\setminus\{1\}$, then $f$ fixes a point $v$, and therefore $\{1,f\}=G_v$ 
(since stabilizers have order $2$). 
This shows that every generalized dihedral group has the strict-EKR-property. 
The same idea cannot be generalized to cases where $G$ has a regular subgroup of index greater than 2. For example, consider $G=A_4$ acting on $\{1,2,3,4\}$. Then $G$ has the EKR-property, as it admits a regular subgroup $\{id,(1\,2)(3\,4),(1\,3)(2\,4),(1\,4)(2\,3)\}\cong\ZZ_2\times\ZZ_2$ of index 3. However, $G$ does not have the strict-EKR-property since $\{id,(1\,3\,2),(1\,4\,2)\}$ is a maximum non-canonical intersecting set.

In the example below we show that the action of $S_4$ on $2$-element subsets of $\{1,2,3,4\}$ has the EKR-property but not the strict-EKR-property, while the action of $A_4$ on the same set does not have the EKR-property. 

\begin{example}\label{ex:EKR}
{\rm
Let $G=S_4$ acting on the set of all $2$-element subsets of $\{1,2,3,4\}$. Observe that $B=\{\{1,2\},\{3,4\}\}$ is a block of size $2$ for $G$ that induces a complete imprimitivity block system $\B$ with $3$ blocks of size $2$. The kernel of the action of $G$ on $\B$ is $K=\{id,(1\,2)(3\,4),(1\,3)(2\,4),(1\,4)(2\,3)\}$. Observe that $\{id,(1\, 2\, 3\, 4), (1\, 3\, 2), (1\, 4\, 2), (1\, 2\, 4\, 3)\}$ is a clique of size 5 in the derangement graph $\Gamma_G$.
It follows that $\alpha(\Gamma_G)\leq |V(\Gamma_G)|/\omega(\Gamma_G)\leq 24/5$, and since $\alpha(\Gamma_G)$ is an integer, we have 
$\alpha(\Gamma_G)\leq 4=|G_v|$. This shows that $G$ has the EKR-property, that is, $\rho(G)=1$. Observe that $K$ is an intersecting set of size $4$ which is not canonical, and so $G$ does not have the strict-EKR-property.

Also, since $K\le A_4$ it follows that 
the action of $H=A_4$ on the set of all $2$-element subsets of $\{1,2,3,4\}$
has an intersecting set of size $4$, and so $H$ 
does not have the EKR-property. In fact $\rho(H)=2$.

%
}
\end{example}


\section{Transitive groups of prime power degree $p^k$}
\label{sec:2p}
\noindent

The next lemma about intersection densities of transitive permutation groups
admitting imprimitivity block systems arising from semiregular subgroups will be used in the proofs of the main results of this paper.

\begin{lemma}\label{lem:semiq}
Let $G$ be a transitive permutation group 
admitting a semiregular subgroup $H$ whose orbits form 
a $G$-invariant partition
$\B$, and  let $\G$ be the permutation group 
induced by the action of $G$ on $\B$. 
Then $\rho(G)\leq \rho(\G)$.
\end{lemma}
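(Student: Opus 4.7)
The plan is to lift a maximum intersecting set $\F\subseteq G$ to a two-level structure via the cosets of the kernel $K$ of the action of $G$ on $\B$, and then bound $|\F|$ as (bound on the image in $\G$) $\times$ (bound on each fibre). Throughout, fix $v\in V$ and let $B\in\B$ be the block of $\B$ containing $v$.

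First I would check that the image $\overline{\F}\subseteq\G$ of $\F$ under the natural projection $G\to\G$ is an intersecting set of $\G$. Indeed, any equality $g_1(v')=g_2(v')$ in $V$ forces the images of $g_1$ and $g_2$ in $\G$ to send the block containing $v'$ to the same block. Thus $|\overline{\F}|\le\rho(\G)\,|\G_B|$.

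Next I would bound the fibre $\F\cap Kg$ for an arbitrary $g$. Right multiplication by $g^{-1}$ preserves the intersecting property, because the quotient $(xg^{-1})(yg^{-1})^{-1}=xy^{-1}$ is unchanged, so $\F g^{-1}\cap K$ is an intersecting subset of $G$ contained in $K$. The crucial observation is that $H$ is a clique in $\Gamma_G$: for distinct $h_1,h_2\in H$ the ratio $h_1h_2^{-1}\in H\setminus\{1\}$ is a derangement by semiregularity of $H$. Promoting this to the level of cosets, any intersecting subset $\F'\subseteq K$ meets each left coset $mH$ of $H$ in $K$ in at most one element: if $k_1=mh_1$ and $k_2=mh_2$ with $h_1\ne h_2$, then $k_1k_2^{-1}=m(h_1h_2^{-1})m^{-1}$ is a conjugate of the derangement $h_1h_2^{-1}$, hence itself a derangement, contradicting $k_1,k_2\in\F'$. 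This yields $|\F\cap Kg|\le[K:H]=|K|/|H|$.

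Combining the two inequalities, $|\F|\le|\overline{\F}|\cdot|K|/|H|\le\rho(\G)\,|\G_B|\,|K|/|H|$. To finish, the stabilizer orders are related as follows: since $H\le G_B$ acts transitively (indeed regularly) on $B$ and $G_v\le G_B$, orbit–stabilizer gives $|G_B|=|B|\cdot|G_v|=|H|\cdot|G_v|$, so $|\G_B|=|G_B|/|K|=|H||G_v|/|K|$. Substituting collapses the bound to $|\F|\le\rho(\G)\,|G_v|$, i.e.\ $\rho(G)=|\F|/|G_v|\le\rho(\G)$.

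The main delicate point is the fibre bound. Projecting to $\G$ is cheap and uses nothing beyond the block structure, but without an additional structural input the fibres could in principle have size up to $|K|$, which would be too large. The clique-in-cosets argument above is what keeps each fibre down to size $|K|/|H|$, and this is precisely where the semiregularity hypothesis on $H$ is used essentially.
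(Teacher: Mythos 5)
Your proof is correct and follows essentially the same route as the paper's: project $\F$ to an intersecting set of $\G$, and bound each fibre by $|K|/|H|$ via the observation that an intersecting set meets each coset of $H$ in at most one element, because a nonidentity element of the semiregular group $H$ is a derangement and derangements are closed under conjugation. The only cosmetic difference is that you translate the fibre into $K$ before slicing it into $H$-cosets, whereas the paper slices the coset $fK$ directly; the counting and the final stabilizer bookkeeping agree.
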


\begin{proof}
Let $G$ be a transitive permutation group acting on a set $V$. 
Let $K=\textrm{Ker}(G\to \G)$ be the kernel of the action of $G$ on $\B$,
and let $\F$ be an intersecting set of $G$. 
We claim that
\begin{eqnarray}
\label{eq:p} 
|\F \cap gH| \leq 1 \textrm{ for every $g\in G$}.
\end{eqnarray}
Let $x,y\in \F \cap gH$. Then $x=gh_1$ and $y=gh_2$ 
for some $h_1,h_2 \in H$, and $xy^{-1}=g(h_1h_2^{-1})g^{-1}$. Since $x,y\in \F$, it follows that $xy^{-1}$ fixes a point. On the other hand, $xy^{-1}$ is a conjugate of an element $h_1h_2^{-1}\in H$, 
and thus since $H$ is semiregular, it follows that $h_1=h_2$, implying that $x=y$, proving (\ref{eq:p}).

We now show that $\overline{\F}=\{\overline{f}\mid f \in F\}$ is an intersecting set of $\G$.  Let $f,g\in \F$. Then $fg^{-1}$ fixes a point $v$, and hence $\overline{fg^{-1}}=\overline{f} \overline{g}^{-1}$ fixes the block of $\B$ that contains $v$, and so $\overline{\F}$ is indeed 
an intersecting set of $\G$.

Let $\overline{f}\in \overline{\F}$ and let $[\overline{f}]=\{g\in \F \mid \overline{g}=\overline{f}\}$ be the set of all those elements in $\F$ whose
image under the homomorphism $G\to\G$ is equal to $\overline{f}$.  
Of course, $[\overline{f}]\subseteq fK$. Writing $fK$ as a union of $|K\colon H|$ cosets of $H$, and using (\ref{eq:p}), it follows that $[\overline{f}]$ contains at most one element from each of the cosets of $H$, that is, $|[\overline{f}]|\leq |K|/|H|$. Since 
$$
\F=\bigcup_{\overline{f}\in \overline{\F}} [\overline{f}]
\textrm{ it follows that }
|\F| \leq \frac{|K||\overline{\F}|}{|H|}.
$$ 

Now $\overline{\F}$ being  an intersecting set of $\G$, implies that
$|\overline{\F}| \leq \rho(\G) \cdot |\G_B|$ for  $B\in \B$. 
Since $\G$ is a transitive permutation group of degree $\frac{|V|}{|H|}$ 
we have that $|\G_B|=\frac{|\G||H|}{|V|}$, and so 
$$
|\F| \leq \frac{|K|}{|H|}\cdot|\overline{\F}| \leq  \frac{|K|}{|H|}\cdot \rho(\G)
\cdot|\G_B|=\frac{|K|}{|H|}\cdot \rho(\G) \cdot \frac{|\G||H|}{|V|}=\rho(\G)\cdot  \frac{|K||\G|}{|V|}=\rho(\G)\cdot |G_v|.
$$
Hence $|\F|/|G_v|\le \rho(\G)$, and 
since $\F$ is an arbitrary intersecting set of $G$, it follows that $\rho(G)\leq \rho(\G)$.
\end{proof}

\begin{proofT2}
Let $G$ be transitive permutation group of degree $p^k$, where $p$ is 
a prime and $k\ge 1$, acting on a set $V$.
Let $P$ be a Sylow $p$-subgroup of $G$ of order  $|P|=p^m$.
Then, by \cite[Theorem~3.4]{W64}, $P$ is transitive on $V$.
In view of Proposition~\ref{pro:minimal} we only need to show that
$\rho(P)= 1$.

The proof will be by  induction on $|P|=p^m$. If $m=1$, it follows that $P$ is regular, hence $\rho(P)=1$ by Proposition~\ref{pro:semiregular}. Suppose that $m>1$, and that intersection density of every transitive $p$-group of order less than $p^m$ is equal to 1.
By a well-known result on $p$-groups,  
the center $Z=Z(P)$ of  $P$ is non-trivial.
Observe that,  since $G$ acts faithfully on $V$, the group
$Z$ is semiregular on $V$. Moreover, $Z$ is a normal subgroup of $P$, hence the orbits of $Z$ form a $P$-invariant partition. Let $Q$ be the permutation group induced by the action of $P$ on the orbits of $Z$. Then $Q$ is a transitive $p$-group of order less than $|P|$, hence by the induction hypothesis $\rho(Q)=1$. Applying Lemma~\ref{lem:semiq} it follows that $\rho(P)\leq \rho(Q)=1$, hence $\rho(P)=1$.
\end{proofT2}


\section{Transitive groups of degree $2p$}
\label{sec:2p}
\noindent

The intersection density of transitive permutation groups of
degree $2p$, $p$ a prime, has first been addressed in \cite{MRS21},
with the partial answer that this density is at most $2$
given in \cite{R21} (see Proposition~\ref{pro:AMC}).
Its proof
relies on the fact that a transitive permutation group of degree $2p$,
$p$ prime, is either doubly transitive, in which case
Proposition~\ref{pro:2-transitive} implies that  its
intersection density equals 1, or it contains a derangement
of order $p$, in which case the corresponding derangement graph
contains a clique of size $p$, and so
Proposition~\ref{pro:coclique} applies
to get that its intersection density is at most $2$.

\begin{proposition}
\label{pro:AMC}
{\rm \cite[Theorem~1.10]{R21}
Let $G$ be transitive permutation group of degree $2p$,
$p$ a prime, then $\rho(G)\le 2$.}
\end{proposition}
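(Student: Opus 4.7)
The plan closely follows the sketch given in the remark preceding the statement. If $G$ is doubly transitive, Proposition~\ref{pro:2-transitive} immediately gives $\rho(G)=1$. Otherwise, the strategy is to exhibit a derangement $d\in G$ of order $p$: since $p$ is prime, $\langle d\rangle$ is automatically semiregular on $V$ (a fixed point of any nontrivial $d^k$ would force a fixed point of $d$ itself), and its orbits partition $V$ into exactly two parts of size $p$. Proposition~\ref{pro:semiregular} then yields $\rho(G)\le 2$; equivalently, $\langle d\rangle$ induces a clique of size $p$ in $\Gamma_G$ and Proposition~\ref{pro:coclique} gives the same bound.

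The substantive work is producing a derangement of order $p$ in every non--doubly-transitive $G$ of degree $2p$, which I would treat by a case analysis on the (im)primitivity of $G$, noting that any nontrivial block of $G$ has size $2$ or $p$. When $G$ has $p$ blocks of size $2$, the kernel $K$ of the action on the blocks embeds into $\ZZ_2^p$ via restriction to each block and so is a $2$-group; hence for $p$ odd a Sylow $p$-subgroup $P$ of $G$ meets $K$ trivially, injects into the quotient action on $p$ blocks (whose Sylow $p$-subgroup has order $p$), and therefore has order exactly $p$, with any generator inducing a $p$-cycle on the blocks and hence being fixed-point-free on $V$. When $G$ has $2$ blocks of size $p$, the index-$2$ kernel $K$ is transitive on each block; bounding a Sylow $p$-subgroup $P_K$ of $K$ via the restriction maps $\pi_i\colon P_K\to\Sym(B_i)$ gives $|P_K|\in\{p,p^2\}$, and in either case one produces the desired derangement (directly from any generator if $|P_K|=p$; by multiplying a nontrivial element of $\ker\pi_1$ with one of $\ker\pi_2$ in the split $P_K\cong\ZZ_p\times\ZZ_p$ if $|P_K|=p^2$).

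The remaining, and hardest, case is that $G$ is primitive but not doubly transitive (simply primitive). I expect this to be handled via the known classification of simply primitive groups of degree $2p$, verifying family by family that such a $G$ contains an element of order $p$ acting without fixed points (for instance, a $5$-cycle acts on the ten unordered pairs of $\{1,\dots,5\}$ as a product of two disjoint $5$-cycles, so $A_5$ on pairs does admit a derangement of order $5$). This classification-dependent step is the main obstacle of the proof.
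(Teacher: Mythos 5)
Your argument is correct, and it implements exactly the sketch the paper gives just before the statement (doubly transitive $\Rightarrow\rho(G)=1$ by Proposition~\ref{pro:2-transitive}; otherwise produce a derangement of order $p$, hence a clique of size $p$ in $\Gamma_G$ and $\rho(G)\le 2$ by Propositions~\ref{pro:coclique} and~\ref{pro:semiregular}). Your treatment of the two imprimitive cases is sound: with $p$ blocks of size $2$ the kernel is a $2$-group and a generator of a Sylow $p$-subgroup is forced to be a derangement, and with two blocks of size $p$ the analysis of $P_K\le\ZZ_p\times\ZZ_p$ goes through. Where you diverge from the paper is in how the order-$p$ derangement is obtained: the paper (in the proof of Theorem~\ref{the:main3}) invokes the elementary result of Maru\v si\v c \cite[Theorem~3.6]{DM81} that every transitive group of degree $mp$ with $m\le p$ contains an $(m,p)$-semiregular element, which hands over a semiregular $\la\pi\ra\cong\ZZ_p$ with two orbits uniformly in all cases -- doubly transitive, simply primitive and imprimitive alike -- with no case distinction and no classification input. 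Your route instead settles the simply primitive case by appealing to the CFSG-dependent classification of such groups ($A_5$ and $S_5$ on pairs, \cite{LS}), which is a substantially heavier tool than the statement requires; if you want to keep your case analysis but stay classification-free, note that a primitive group of degree $2p$ ($p\ge 3$) containing an element of order $p$ with fixed points (necessarily a $p$-cycle fixing $p$ points) is forced to be doubly transitive by Jordan's theorem on primitive groups containing a prime cycle, so in the simply primitive case every element of order $p$ is automatically a derangement. In short: your proof is valid, but the paper's semiregularity argument is both shorter and independent of CFSG, which yours is not as written.
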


Transitive permutation groups of degree $2p$, $p$ a prime, 
have received a considerable attention over the
last decades, mostly within the context of vertex-transitive graphs 
(see  \cite{I1,I2,I3,I4,DM81,S72,W56}).
Such a group is doubly transitive, simply primitive or it admits a 
complete imprimitivity block system consisting of blocks of size $2$ or $p$. 
By Proposition~\ref{pro:2-transitive}  the intersection density of
doubly transitive permutation groups is equal to $1$.
By the classification of finite simple groups (CFSG)
the only simply primitive groups of degree twice a prime 
are the groups
$A_5$ and $S_5$ acting on the set of pairs of a $5$-element set,
see \cite{LS}. 
(It would be of interest to produce a CFSG-free proof of this fact.)

Whereas simply primitive groups and groups admitting a
complete imprimitivity block system consisting of blocks of size $p$
are dealt with directly in the proof of Theorem~\ref{the:main3}, some
preliminary observations are needed for groups admitting a 
complete imprimitivity block system with 
blocks of size $2$.
So let $G$ be a transitive permutation group acting on the set 
$V=\{x_0,\ldots,x_{p-1},y_0,\ldots,y_{p-1}\}$ with 
complete imprimitivity block system $\B$ with blocks $\{x_i,y_i\}$ of size $2$. 
We denote by $\G=G/{\B}$ the permutation group induced by the 
action of $ G$  on $\B$, 
that is, for each $g\in G$ its induced action on $\B$ is denoted by
$\overline{g}$.
In this induced action the block $\{x_i,y_i\}$ is identified
with $i$ for each $i\in \ZZ_p$. 

\begin{lemma}
\label{lem:blocks of size 2 induced action solvable}
Let $p$ be a prime and 
$G$ be a transitive permutation group of degree $2p$ 
acting on a set $V$ and having a 
complete imprimitivity block system $\B$  with blocks 
of size $2$ such that 
the kernel $K=\textrm{Ker}(G\to \G)\ne 1$  
and  the induced action $\G=G/{\B}$ is not doubly transitive. 
Then $\rho(G)=1$ unless
$\G$ is cyclic and $K\le\textrm{Alt}(V)$, 
in which case $\rho(G)=2$. 
\end{lemma}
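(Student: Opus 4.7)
The plan is to establish the stated dichotomy by exploiting the structure of $\G$ as a Frobenius subgroup of $\AGL(1,p)$ and the module structure of $K$. Since $\G$ is transitive of prime degree $p$ but not doubly transitive, Burnside's classical theorem gives $\G=\langle\bar\rho\rangle\rtimes\langle\bar\tau\rangle\le\AGL(1,p)$, where $\langle\bar\rho\rangle$ is the unique (normal) Sylow $p$-subgroup and the complement $\langle\bar\tau\rangle$ has order $d\mid p-1$. After suitable labelling, a lift $\rho\in G$ acts as $(x_0\,x_1\,\cdots\,x_{p-1})(y_0\,y_1\,\cdots\,y_{p-1})$, so $P=\langle\rho\rangle$ is semiregular with the two orbits $X=\{x_i\}$ and $Y=\{y_i\}$. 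The kernel $K$ is a $\G$-invariant subgroup of $E=\langle\epsilon_i\mid i\in\ZZ_p\rangle\cong\ZZ_2^p$; since $\gcd(p,2)=1$, Maschke's theorem decomposes $E$ as a $\ZZ_2[\langle\bar\rho\rangle]$-module into the trivial line $\langle\sum_i\epsilon_i\rangle$ (the unique $\rho$-fixed line) and its complement $E_0$ (the even-weight subgroup). For $p$ odd, $E\cap\Alt(V)=E_0$, so $K\le\Alt(V)$ iff $K\le E_0$, and $K\not\le\Alt(V)$ forces $\sum_i\epsilon_i\in K$ by semisimplicity applied to the $\G$-submodule $K$.

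Next I would dispatch the two easier cases. In the ``unless'' case where $\G$ is cyclic and $K\le\Alt(V)$, we have $G=K\rtimes P$, so $|G_v|=|K|/2$, and every nontrivial $k_1k_2^{-1}\in K$ has even support of size at most $p-1$ and hence fixes some block pointwise; thus $K$ is an intersecting set of size $|K|=2|G_v|$, and combined with Proposition~\ref{pro:AMC} this gives $\rho(G)=2$. When $K\not\le\Alt(V)$, the element $\sum_i\epsilon_i\in K$ commutes with $\rho$ and swaps $X$ with $Y$, so $\langle\rho,\sum_i\epsilon_i\rangle\cong\ZZ_{2p}$ is a regular subgroup of $G$ and Proposition~\ref{pro:semiregular} yields $\rho(G)=1$.

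The remaining case, $\G$ non-cyclic with $K\le\Alt(V)$, is the technical heart of the lemma. Here $H=KP$ is a transitive normal subgroup with $G/H$ cyclic of order $d\ge 2$; the ``unless'' case applied to $H$ shows that every intersecting set of $H$ has size at most $|K|=2|H_v|$, a maximum one being a translate of $K$. For an intersecting $\F\subseteq G$ I would bound $|\F\cap gH|\le|K|$ for each coset $gH$ and then show that these coset pieces cannot simultaneously attain their maxima. The key computation: given $k_1\in\F\cap H$ and $\sigma k^*\in\F\cap\sigma H$ with $\bar\sigma\in\G\setminus\langle\bar\rho\rangle$, the cross differences $k_1(\sigma k^*)^{-1}$, as $k_1$ ranges over a translate of $K$, sweep out an entire translate of the ``reflection coset'' $\sigma K$, which must then consist of non-derangements. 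Using that $K\le E_0$ together with $\langle\bar\rho\rangle$-invariance forces $K$ to have full coordinate projections, one shows that $\sigma K$ necessarily contains derangements whenever $\bar\sigma\notin\langle\bar\rho\rangle$; this contradiction forces the total size $|\F|$ to be at most $d|K|/2=|G_v|$, so $\rho(G)=1$. The main obstacle is carrying out this cross-coset bookkeeping rigorously: it requires a careful joint analysis of the Frobenius complement action on $K$ and the pattern of derangements in each reflection coset, and may branch according to the order of the complement element $\bar\sigma$.
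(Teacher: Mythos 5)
Your handling of the two easier branches is correct and essentially reproduces the paper's argument: the Maschke decomposition is a (valid) dressed-up version of the observation that an odd permutation in $K$, together with $\la\bar\rho\ra$-invariance, forces $\sum_i\epsilon_i\in K$ and hence a regular cyclic subgroup of order $2p$; and in the cyclic case $K$ consists of non-derangements and is an intersecting set of size $2|G_v|$, exactly as in the paper.

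The non-cyclic case, however, has a genuine quantitative gap. Your cross-difference computation rules out only one extreme configuration: if $\F\cap H$ contains a \emph{full} translate of $K$, then $\F\cap\sigma H=\emptyset$ for $\sigma\notin H$. This leaves open, say, $|\F\cap gH|=|K|-1$ for every coset, giving only $|\F|\le d(|K|-1)$, far above $|G_v|=d|K|/2$ once $|K|>2$; "the coset pieces cannot simultaneously attain their maxima" cannot by itself produce the factor of $2$ you need. Two sharper facts are required, and they are precisely the content of the paper's Claims 1--3. First, for $g\notin H$ and $f_1,f_2\in\F\cap gH$, the element $f_1f_2^{-1}$ lies in $H$ (which is normal, being the preimage of the normal Sylow $p$-subgroup of $\G$), so $\overline{f_1f_2^{-1}}\in\la\bar\rho\ra$; being a non-derangement it must fix a point of $\ZZ_p$ and is therefore trivial. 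Hence $\F\cap gH$ lies in a \emph{single} coset $f_1K$ whose image fixes a unique block $\{x_i,y_i\}$, every element of $\F\cap gH$ fixes $x_i$ and $y_i$, and since $K$ surjects onto each coordinate this gives $|\F\cap gH|\le|K|/2$ --- not merely $<|K|$. Second, and this step is absent from your sketch altogether: if some such piece is nonempty with common fixed block $\{x_i,y_i\}$, then for any $f\in\F\cap K$ the quotient of $f$ with an element of that piece is again a non-derangement fixing only that block, which forces $f$ itself to fix $x_i,y_i$; hence $|\F\cap K|\le|K|/2$ as well (the paper's Claim 3). Without this second bound, even the improved per-coset estimate yields only $|K|+(d-1)|K|/2=(d+1)|K|/2>|G_v|$. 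With both facts one gets $|\F|\le|K|/2+(d-1)|K|/2=|G_v|$ when $\F\not\subseteq K$, and $|\F|\le|K|\le d|K|/2$ otherwise. So your skeleton is salvageable, but the "careful bookkeeping" you defer is the actual proof, and the mechanism you propose does not deliver it.
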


\begin{proof}
Let $K=\textrm{Ker}(G\to \G)$. If $K$ contains an odd permutation
then it is easy to see that $G$ contains a cyclic regular subgroup.
It follows that $\rho(G)=1$ by Proposition~\ref{pro:semiregular}. 
We may therefore assume that $K\le \textrm{Alt}(V)$.  

Suppose first that $\G$ is cyclic. Then $|G|=|K||\G|=p|K|$. 
Since $K\le \textrm{Alt}(V)$, $K$ contains no derangement,
and so $K$ is an intersecting set.
Since $|K|=2|G_v|$ it follows  that $\rho(G)\geq 2$,
and so, by Proposition~\ref{pro:AMC}, $\rho(G)=2$.

Suppose now that $\G$ is not cyclic. Since $\G$ is not doubly transitive group
of degree $p$, it follows that $\G= \langle a \rangle \rtimes \langle b \rangle \cong \ZZ_p\rtimes \ZZ_d$ for some divisor $d\geq 2$ of $p-1$, where 
we assume that $b$ fixes $0$ and has all other cycles  
of length $d$ in its cycle decomposition. 
Let $\F$ be an intersecting set of $G$. By Proposition~\ref{pro:1} 
we may assume that $1\in \F$, and so no element of $\F$ is a derangement.
In particular, every element of $\F$ must fix at least one block in $\B$, and so we can express $\F$ as a union of disjoint sets
$$
\F=(\F \cap K) \cup \F_0 \cup \F_1 \cup \ldots \cup \F_{p-1},
$$
where $\F_i=\{f\in \F\mid  fix(\overline{f})=\{i\}\}$. Namely,
for $k\in K$ we have $fix(\overline{k})=\ZZ_p$, whereas 
a non-identity element of 
$\G\cong \ZZ_p\rtimes \ZZ_d$ can have at most one fixed point. 
Consequently an element of $G$ can belong to at most one of the sets $\F_i$.

Suppose that $\F_i\neq \emptyset$, for some $i\in \ZZ_p$. 
Since no element of $\F$ is a derangement  
it follows that for each $f\in\F_i$ we have $fix(f)=\{x_i,y_i\}$.
Let $\sigma\in G$ be such that $\overline{\sigma}=b$. 
In particular,  $\sigma\notin K$, and $\overline{\sigma}$ fixes only the block 
$\{x_0,y_0\}$ and is of order $d$. 
Consequently, either $fix(\sigma)=\emptyset$ or $fix(\sigma)=\{x_0,y_0\}$. 
We may  assume that $fix(\sigma)=\{x_0,y_0\}$ for 
if $fix(\sigma)=\emptyset$ we can multiply $\sigma$ with an element of $k\in K$ interchanging $x_0$ and $y_0$
(such an element exists since $K\neq 1$), and so $\overline{\sigma k}=b$ 
and $fix(\sigma k)=\{x_0,y_0\}$.
Choose  $\pi\in G$ in such a way that $\overline{\pi}=a$, and let 
$f_i=\pi^i \sigma \pi^{-i}$. 
Then $fix(f_i)=\{x_i,y_i\}$ and $fix(\overline{f_i})=\{i\}$. Let $K_i=\{k\in K\mid x_i,y_i \in fix(k)\}$.

\medskip
\noindent
{\sc Claim 1:} $\F_i\subseteq f_iK_i\cup f_i^2K_i \cup \ldots \cup f_i^{d-1}K_i$, for every $i\in \ZZ_p$.
\medskip

\noindent
Let $f\in \F_i$ be arbitrary. Then $fix(\overline{f})=\{i\}$, which
 together with the fact that $\G\cong \ZZ_p\rtimes \ZZ_d$
implies that $\overline{f}\in \langle \overline{f_i} \rangle$. 
Therefore $\overline{f}=\overline{f_i}^t$ for some $t\in \ZZ_d$, and so
$\overline{f_i^{-t}f}=1$ in $\G$. Hence 
$f_i^{-t}f \in K$, and so $f\in f_i^tK$.  
Since $f$ is not contained in $K$ we therefore have 
$f=f_i^tk$ for $t\ne 0$ and  $k\in K$. 
Observe that $f$ as an element of $\F_i$ must have
a fixed point, and so $fix(f)=\{x_i,y_i\}$. Consequently, since
 $fix(f_i)=\{x_i,y_i\}$ it follows that
$k$ must also fix $x_i$ and $y_i$. This implies that
$f=f_i^tk\in f_i^tK_i$, completing the proof of Claim 1.

\medskip
\noindent
{\sc Claim 2:} If $\F\cap f_i^tK_i \neq \emptyset$ (with $i\in\ZZ_p$ and $t\in \ZZ_d\setminus\{0\}$), then $\F\cap f_j^tK_j = \emptyset$ for  $j\in \ZZ_p\setminus\{i\}$. 
\medskip

\noindent
Let $f=f_i^tk_i\in \F\cap f_i^tK_i$ for some $k_i\in K_i$, and let 
$g=f_j^tk_j\in \F\cap f_j^tK_j$ for some $j\in \ZZ_p$ and $k_j\in K_j$. Since $\F$ is an intersecting set, it follows that $fg^{-1}$ has a fixed point. 
Since $K$ is normal in $G$ we have 
that $fg^{-1}=f_i^tk_ik_j^{-1}f_j^{-t}=f_i^tf_j^{-t}k$ for some $k\in K$.
Consequently, $\overline{fg^{-1}}=\overline{f_i^tf_j^{-t}}$. Observe that $f_i^tf_j^{-t}= (\pi^i f_0^t \pi^{-i})(\pi^j f_0^{-t} \pi^{-j})=(\pi^i f_0^t \pi^{-i}) \pi^{j-i}(\pi^i f_0^{-t}\pi^{-i})\pi^{i-j}$, which implies that 
$\overline{f_i^tf_j^{-t}}$ belongs 
to the commutator subgroup $[\G,\G]$ of $\G$. But
$[\G,\G]=\langle \overline{\pi} \rangle=\langle a \rangle$.
By assumption $fg^{-1}$ is not a derangement, and so  
it follows that $\overline{fg^{-1}}=1$, that is, $\overline{f}=\overline{g}$. 
Recall that $fix(\overline{f})=fix(\overline{f_i^t})=\{i\}$ and $fix(\overline{g}))=fix(\overline{f_j^t})=\{j\}$. 
It follows that $i=j$, completing the proof of Claim 2.

\medskip
\noindent
{\sc Claim 3:} If exactly $m$ of the sets $\F_i$ are non-empty, then $|\F\cap K|\leq \frac{|K|}{2^m}$.
\medskip

\noindent
Let $W=\{i\in \ZZ_p\mid \F_i\neq \emptyset\}$ and let $|W|=m$. 
Let $f\in \F\cap K$ be arbitrary and take  $g\in \F_i$, $i\in W$. 
Since $g$ is not a derangement we have that $fix(g)=\{x_i,y_i\}$.
But by assumption $gf^{-1}$ must fix a point, and so $fix(gf^{-1})=\{x_i,y_i\}$. Consequently, $f$ must also fix $x_i$ and $y_i$. 
This shows that $\F\cap K\leq K_i$ for every $i\in W$. 
It follows that $\F \cap K \leq \cap_{i\in W}K_i=K_{(W)}$, 
where $K_{(W)}=\{k\in K\mid k(x_i)=x_i \textrm{ for  every } i\in W\}$. 
It is easy to see that $|K_{(W)}|=\frac{|K|}{2^m}$, and so
$|\F\cap K|\leq \frac{|K|}{2^m}$, proving Claim 3.

\medskip

In the rest of the proof we  distinguishing two cases. 
If $\F_i=\emptyset$ for every $i\in \ZZ_p$ then 
$\F\subseteq K$, and so $|\F|\leq |K|\leq \frac{|K|\cdot d}{2}=|G_v|$,
where the second inequality holds since $d\geq 2$.
We conclude that $\rho(G)=1$.

Suppose now that
$\F_i\neq \emptyset$ for some $i\in \ZZ_p$.
Recall that $\F=(\F \cap K) \cup \F_0 \cup \F_1 \cup \ldots \cup \F_{p-1}$. 
By Claim 1 it follows that 
\begin{eqnarray*}
\F\subseteq (\F\cap K) &\cup &  (\F \cap (f_0K_0\cup f_1K_1\cup \ldots \cup f_{p-1}K_{p-1}))\\
&\cup & (\F \cap  (f_0^2K_0\cup f_1^2K_1\cup \ldots \cup f_{p-1}^2K_{p-1}))\\
&\ldots & \\
&\cup & (\F \cap  (f_0^{d-1}K_0\cup f_1^{d-1}K_1\cup \ldots \cup f_{p-1}^{d-1}K_{p-1})).
\end{eqnarray*}
Claim 2 implies that $|\F \cap  (f_0^tK_0\cup f_1^tK_1\cup \ldots \cup f_{p-1}^tK_{p-1})|\leq |f_i^tK_i|=|K_i|=|K|/2$.
Therefore $|\F|\leq |\F\cap K|+(|K|/2)(d-1)$.  
Since  at least one of the sets $\F_i$ is non-empty, Claim 3 implies 
that $|\F\cap K|\leq \frac{|K|}{2}$. Consequently, 
$|\F|\leq \frac{|K|}{2}\cdot d=|G_v|$, and so $\rho(G)=1$. Completing the proof of Lemma~\ref{lem:blocks of size 2 induced action solvable}.
\end{proof}

\begin{corollary}\label{cor:strict-EKR}
Let $G$ be a transitive group
of degree $2p$ acting on a set $V$ that admits a 
complete imprimitivity block system $\B$  with blocks 
of size $2$ such that 
the kernel $K=\textrm{Ker}(G\to \G)\ne 1$  
and  $\G=G/{\B}\cong \ZZ_p\rtimes \ZZ_d$ is not doubly transitive.
If $K\le Alt(V)$ then $G$ has EKR-property if and only if $d>1$ and $G$ has the strict-EKR-property if and only if $d>2$. 
\end{corollary}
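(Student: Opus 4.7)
The EKR equivalence is immediate from Lemma~\ref{lem:blocks of size 2 induced action solvable}: under the hypothesis $K\le\mathrm{Alt}(V)$, that lemma gives $\rho(G)=2$ precisely when $\G$ is cyclic (i.e.\ $d=1$), and $\rho(G)=1$ otherwise. Hence $G$ has the EKR-property if and only if $d>1$.

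For the strict-EKR equivalence, the case $d=1$ is immediate since EKR itself fails (and strict-EKR implies EKR, as every canonical set has size $|G_v|$). For $d=2$, the plan is to exhibit the kernel $K$ itself as a non-canonical maximum intersecting set. Every $k\in K$ is, by $K\le\mathrm{Alt}(V)$, an even product of the transpositions $(x_i\,y_i)$; since $p$ is odd (the $d\le 2$ cases with $p$ small force $\G$ to be doubly transitive, which is ruled out), this product moves strictly fewer than $p$ blocks and so fixes at least one block pointwise, giving $k$ a fixed point. Thus $K$ is intersecting, and the equality $|K|=|K|d/2=|G_v|$ makes it of maximum size. On the other hand, $K\ne 1$ together with normality of $K$ in $G$ and transitivity of $G$ on the blocks forces $K$ to contain an element swapping the two points of every block; consequently $K$ fixes no point of $V$, so $K\ne G_v$ for every $v$, and since $1\in K$, $K$ is not a coset of any stabilizer. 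Strict-EKR therefore fails.

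For the main case $d>2$, I would reuse the notation and estimates from the proof of Lemma~\ref{lem:blocks of size 2 induced action solvable}. Let $\F$ be a maximum intersecting set with $1\in\F$ (Proposition~\ref{pro:1}); the goal is $\F=G_v$ for some $v\in V$. Decompose $\F=(\F\cap K)\cup\F_0\cup\cdots\cup\F_{p-1}$ as there. If every $\F_i$ is empty, then $\F\subseteq K$ forces $|\F|\le|K|<|K|d/2=|G_v|$, contradicting maximality -- and this is exactly where the hypothesis $d>2$ enters. Hence some $\F_i$ is non-empty; letting $m$ denote the number of non-empty $\F_i$'s, Claims 1--3 of that proof combine to yield $|\F|\le|K|/2^m+(d-1)|K|/2$, and equality with $|K|d/2$ forces both $m=1$ and saturation of each intermediate bound. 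Writing $i$ for the unique non-empty index, the equality case gives $\F\cap K=K_i$ together with $\F\cap f_i^tK_i=f_i^tK_i$ for every $t\in\{1,\ldots,d-1\}$, so $\F=\bigcup_{t=0}^{d-1}f_i^tK_i$. Since both $f_i$ and every element of $K_i$ fix $x_i$, one has $\F\subseteq G_{x_i}$, and matching orders yield $\F=G_{x_i}$, a canonical intersecting set.

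The main obstacle is the extraction of the equality case in the $d>2$ situation: one must simultaneously reconcile the three separate inequalities from Claims 1--3 (containment of $\F_i$ in $\bigcup_t f_i^tK_i$, disjointness across distinct $i$'s of the occupied cosets $f_i^tK_i$, and the factor $1/2^m$ bounding $|\F\cap K|$), verifying that their joint saturation collapses to a single index $i$ and fills out every coset $f_i^tK_i$, so that the resulting set can be identified with the point stabilizer $G_{x_i}$ via a clean size comparison.
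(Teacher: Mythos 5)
Your proof follows essentially the same route as the paper: the EKR claim is read off from Lemma~\ref{lem:blocks of size 2 induced action solvable}, for $d=2$ the kernel $K$ is exhibited as a non-canonical maximum intersecting set, and for $d>2$ the equality case of the bound $|\F|\le |K|/2^m+(d-1)|K|/2$ from that lemma's proof forces $m=1$ and $\F=K_i\cup f_iK_i\cup\cdots\cup f_i^{d-1}K_i=G_{x_i}$; the paper's own proof is exactly this (and is in fact terser about the equality-case extraction you rightly flag as the delicate point). One small slip in your $d=2$ argument: $K$ cannot contain a single element swapping the two points of \emph{every} block, since that element is a product of $p$ transpositions with $p$ odd, hence odd, contradicting $K\le\mathrm{Alt}(V)$; what you need (and what normality of $K\ne 1$ plus transitivity of $G$ does give) is that for \emph{each} block some element of $K$ swaps its two points, or more simply that $\mathrm{Fix}(K)$ is a $G$-invariant proper subset of $V$ and hence empty, so $K\ne G_v$ for every $v$.
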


\begin{proof}
The claim regarding EKR-property follows directly from Lemma~\ref{lem:blocks of size 2 induced action solvable}, since $G$ has EKR-property if and only if $\rho(G)=1$. 
If $d=2$ then $F=K$ is a maximum intersecting set which is not canonical, implying that $G$ does not have the strict-EKR-property.

Suppose that $d>2$. Then $|K|<|G_v|$, hence a maximum intersecting set cannot be contained in $K$. From the proof of Lemma~\ref{lem:blocks of size 2 induced action solvable}, it follows that the size of an intersecting set $\F$ is at most $\frac{|K|}{2^m}+\frac{|K|}{2}(d-1)$, where $m$ is the number of sets $\F_i$ (defined in the proof of Lemma~\ref{lem:blocks of size 2 induced action solvable}) that are non-empty. 
Observe that $\frac{|K|}{2^m}+\frac{|K|}{2}(d-1)=\frac{d|K|}{2}=|G_v|$ if and only if $m=1$.
It follows that a maximum intersecting set $\F$ equals $K_i\cup f_iK_i \cup \ldots \cup f_i^{d-1}K_i=G_{x_i}$, implying that $G$ has the strict-EKR-property.
\end{proof}

\begin{remark}
\label{remark:strong-EKR}
{\rm  
Note that Example~\ref{ex:EKR} is the special case of the situation described in Corollary~\ref{cor:strict-EKR} with
$p=3$, $d=2$, $|K|=4$ for $S_4$ and $p=3$, $d=1$, $|K|=4$ for $A_4$.
}
\end{remark}

The following result, which can be extracted from 
\cite[Theorem~6.2]{DM81}
and \cite[Lemma~3.4]{MP01}, will be needed in the proof 
of Theorem~\ref{the:main3}.

\begin{proposition} \label{pro:DM2}
{\rm \cite[Lemma~3.4]{MP01}}
Let $G$ be a transitive permutation group of degree 
$2p$, $p$ a prime, admitting  a complete imprimitivity block system $\B$ 
with blocks of size $2$. Then either $G$ also admits blocks of size $p$,
or for any pair $B,B'\in\B$ there exists
$g\in K=\textrm{Ker}(G\to \G)$ fixing $B$ pointwise and $B'$ setwise but not pointwise.
\end{proposition}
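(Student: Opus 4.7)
The plan is to analyze $K$ as an $\mathbb{F}_2\G$-submodule of the elementary abelian $2$-group $E=\prod_{B\in\B}\mathrm{Sym}(B)\cong \mathbb{F}_2^{\B}$: each $k\in K$ equals $\sum_B k_B\epsilon_B$ with $k_B\in\{0,1\}$ and $\epsilon_B$ the transposition on $B$, so that $k_B=0$ iff $k$ fixes $B$ pointwise. Writing $\pi_B\colon K\to\mathbb{F}_2$ for the $B$-coordinate projection, I would extract the dichotomy from the relation $\sim$ on $\B$ defined by $B\sim B'$ iff $\pi_B=\pi_{B'}$. This is an equivalence relation, and normality of $K$ in $G$, together with the fact that conjugation by $g\in G$ permutes the generators $\epsilon_B$ via $\bar g\in\G$, makes it $\G$-invariant. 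Since $\G$ is a transitive group on $\B$ of prime degree $p$, it is primitive, so $\sim$ must be either trivial or universal.

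If $\sim$ is trivial, I would verify alternative (B) directly. First, no $\pi_B$ can be zero, because $\G$-transitivity would then force every $\pi_{B'}=0$, i.e.\ $K=0$, making $\sim$ universal. Thus each $K_B:=\ker\pi_B$ is a hyperplane of $K$; for any distinct $B,B'$ the hyperplanes $K_B, K_{B'}$ differ, and two distinct hyperplanes of the same codimension cannot contain one another, so $K_B\setminus K_{B'}$ is nonempty, and any $k$ in this difference has $k_B=0$ and $k_{B'}=1$ --- precisely the required element.

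The main obstacle is the universal case: there $k_B$ is constant in $B$ for every $k\in K$, so $K\le\langle\mathbf{1}\rangle$ with $\mathbf{1}=\sum_B\epsilon_B$, giving $|K|\in\{1,2\}$. One must now produce a complete block system with blocks of size $p$, i.e.\ a $G$-invariant partition of $V$ into two complementary transversals of $\B$. My approach is to parameterize such transversal-pairs by the $(p-1)$-dimensional $\mathbb{F}_2$-space $\mathbb{F}_2^{\B}/\langle\mathbf{1}\rangle$, note that $K$ acts trivially on it (vacuously when $K=1$, and because $\mathbf{1}$ swaps each transversal with its complement when $|K|=2$), and reduce the problem to producing a $\G$-fixed point of the induced affine action. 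Since any element of order $p$ in $\G$ acts as a $p$-cycle on $\B$, its fixed subspace on $\mathbb{F}_2^{\B}/\langle\mathbf{1}\rangle$ is trivial for odd $p$; a standard Sylow-based cohomological argument then shows that the relevant class in $H^1(\G,\mathbb{F}_2^{\B}/\langle\mathbf{1}\rangle)$ vanishes, producing the desired $\G$-fixed transversal-pair. Alternatively, one may directly invoke the classification of transitive groups of degree $2p$ from \cite[Theorem~6.2]{DM81}.
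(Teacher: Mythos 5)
The paper gives no proof of this proposition --- it is quoted from \cite{MP01} --- so there is nothing to compare your argument against and I can only assess it on its own terms. Your reduction to the $\G$-invariant equivalence relation $\sim$ is sound ($\G$ is primitive of prime degree, so $\sim$ is trivial or universal), and your treatment of the trivial case is correct. The gap is the universal case, and it is not a repairable one. The obstruction to a $\G$-fixed transversal-pair is a class in $H^1(\G,M)$ with $M=\mathbb{F}_2^{\B}/\langle\mathbf{1}\rangle$, and since $M$ has exponent $2$ this cohomology group is an elementary abelian $2$-group. The restriction--corestriction argument therefore only yields injectivity of restriction to a Sylow \emph{$2$}-subgroup of $\G$; restriction to the Sylow $p$-subgroup $\bar P$ does land in $H^1(\bar P,M)=0$, but when $[\G:\bar P]$ is even this says nothing about a class of order $2$. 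What your argument does prove is the universal case under the extra assumption $\bar P\trianglelefteq\G$ (the unique $\bar P$-fixed transversal-pair is then automatically $\G$-invariant), i.e.\ when $\G$ is not doubly transitive; it proves nothing when $\G$ is doubly transitive.

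In fact the universal case genuinely fails, so no proof of the statement as printed can exist. Take $p=7$ and $G=\PSL(3,2)$ acting on the $14$ cosets of a subgroup $A_4<S_4=G_P$, where $P$ is a point of the Fano plane. This action is transitive and faithful, the fibres over the $7$ points form a block system $\B$ with blocks of size $2$, and the kernel $K$ of $G\to\G$ is the core of $S_4$, hence trivial; moreover each $A_4$ lies in a unique $S_4$ (two such $S_4$'s would normalize it and generate $G$) and $G$ is simple, so the only subgroup strictly between $G_v$ and $G$ is that $S_4$ and there are no blocks of size $7$. Thus both alternatives of the proposition fail, and adjoining the central involution $\mathbf{1}=\prod_B\epsilon_B$ produces $\PSL(3,2)\times\ZZ_2$, a counterexample with $K\neq 1$. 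Equivalently, the relevant class in $H^1(\PSL(3,2),\mathbb{F}_2^{7}/\langle\mathbf{1}\rangle)$ is nonzero, which is exactly the vanishing your sketch asserts. The conclusion to draw is that the statement as quoted must omit some hypothesis of the original \cite[Lemma~3.4]{MP01} (your dichotomy pinpoints precisely where: everything is fine unless $K\leq\langle\mathbf{1}\rangle$ and $\G$ is doubly transitive), and your write-up cannot close the universal case by an unproved cohomological vanishing or a bare appeal to \cite{DM81}.
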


We are now ready to prove the main result of this paper.

\medskip

\begin{proofT3}
Let $p$ be an odd prime, $V$ a set of cardinality $2p$ and
$G$ a transitive permutation
group acting on $V$. 
Since every transitive permutation group
of degree $mp$, where $m\le p$,
contains an $(m,p)$-semiregular element (see, for example,
 \cite[Theorem~3.6]{DM81}) it follows that $G$ contains a
$(2,p)$-semiregular automorphism $\pi$.
Let $P=\la\pi\ra$ and let $O$ and $O'$ be the two orbits of $P$.
Applying Proposition~\ref{pro:semiregular} for 
the semiregular subgroup $P$ we   
conclude that $\rho(G)\le 2$ (see also Proposition~\ref{pro:AMC}).

Suppose first that $G$ is primitive. 
Then by CFSG either $G$ is doubly transitive
or $p=5$ and $G$ is isomorphic to $A_5$ or $S_5$
acting on a $10$-element set of pairs of $\{1,2,3,4,5\}$. 
In the first
case $\rho(G)=1$ by Proposition~\ref{pro:2-transitive}. 
As for the second
case it was calculated in \cite{R21} that $\rho(G)=2$ if $G=A_5$
and   $\rho(G)=1$ if $G=S_5$. In fact it can be seen
that for $G=A_5$  
every subgroup $A_4\le A_5$ gives rise to an intersecting set 
of cardinality $12$, forcing $\rho(G)=2$.
On the other hand, if $G=S_5$ then in the associated derangement graph
a clique of size $10$ is obtained from the union of a Sylow $5$-subgroup
and the coset of this subgroup containing an element of order $4$
normalizing this subgroup (see also the more general argument
in the next paragraph). 

Suppose now that $G$ is imprimitive with $\B$ as the corresponding
complete imprimitivity block system.
Clearly, $\B$ either consists of two blocks of size $p$ or 
$p$ blocks of size $2$. 
In the first case $\B=\{O,O'\}$, 
and Lemma~\ref{lem:semiq} implies that
$\rho(G)\le \rho(\G)=1$ where $\G\cong S_2$ is the 
induced action of $G$ on $\B$.

We may therefore assume that $\B$ consists of blocks of size $2$
and that, furthermore, $G$ admits no blocks of size $p$.
Then, by Proposition~\ref{pro:DM2}, 
the kernel $ K=\textrm{Ker}(G\to \G)$ is non-trivial. 
If $\G$ is not doubly transitive, the result follows 
by Lemma~\ref{lem:blocks of size 2 induced action solvable}. 
Namely,  in this case the condition that 
$\G$ is cyclic and that $K\le \textrm{Alt}(V)$ is equivalent
to part (i) of Theorem~\ref{the:main3}.
If $\G$ is doubly transitive then, 
by Lemma~\ref{lem:prime},  
a Sylow $p$-subgroup $\bar{P}$
of $\G$ is strictly contained in $N=N_{\G}(\bar{P})$.
Consequently, the  preimage $H$ of $N$ under the homomorphism
$G\to \G$ is a transitive permutation group of degree $2p$
satisfying the assumptions of Lemma~\ref{lem:blocks of size 2 induced action solvable}. Since $\overline{H}=N$ is not cyclic it follows that
$\rho(H)=1$.
Now Proposition~\ref{pro:minimal} implies that $\rho(G)=1$, too,
completing the proof of Theorem~\ref{the:main3}.
\end{proofT3}


 \end{document}